\documentclass[11pt,a4paper]{amsart}

\usepackage{amsmath, amsfonts, amsthm, latexsym, amscd}

\usepackage{amsfonts}
\usepackage{amssymb}

\usepackage{enumerate}

\usepackage[dvipsnames]{xcolor}

\usepackage{hyperref}
\hypersetup{
colorlinks=true,
linkcolor=blue,
citecolor=cyan,
urlcolor=NavyBlue,
}

\usepackage[norefs,nocites]{refcheck}

\allowdisplaybreaks

\usepackage{geometry}
\newtheorem{theorem}{Theorem}[section]
\newtheorem{proposition}[theorem]{Proposition}
\newtheorem{corollary}[theorem]{Corollary}
\newtheorem{example}[theorem]{Example}

\newtheorem{remark}[theorem]{Remark}

\newtheorem{lemma}[theorem]{Lemma}
\newtheorem{final remark}[theorem]{Final Remark}
\newtheorem{definition}[theorem]{Definition}

\begin{document}

\title{Geometric spaceability in sequence classes and operator ideals}

\author[N.G.~Albuquerque]{Nacib G.~Albuquerque$^\dag$} 
\address[N.G.~Albuquerque]{Departamento de Matem\'{a}tica \newline\indent
Universidade Federal da Para\'{i}ba \newline\indent
Jo\~ao Pessoa - PB \newline\indent
58.051-900 (Brazil)}
\email{\href{mailto:ngalbuquerque@mat.ufpb.br}{ngalbuquerque@mat.ufpb.br}}
\thanks{$^\dag$Corresponding author. E-mail: \href{mailto:ngalbuquerque@mat.ufpb.br}{\tt ngalbuquerque@mat.ufpb.br}}

\author[J.R.~Campos]{Jamilson R. Campos}
\address[J.R.~Campos]{Departamento de Matem\'{a}tica \newline\indent
Universidade Federal da Para\'{i}ba \newline\indent
Jo\~ao Pessoa - PB \newline\indent
58.051-900 (Brazil)}
\email{\href{mailto:jamilsonrc@gmail.com}{jrc@academico.ufpb.br, jamilsonrc@gmail.com}}

\author[L.F.P.~Sousa]{Luiz Felipe P. Sousa}
\address[L.F.P.~Sousa]{Departamento de Matem\'{a}tica \newline\indent
Universidade Federal da Para\'{i}ba \newline\indent
Jo\~ao Pessoa - PB \newline\indent
58.051-900 (Brazil)}
\email{\href{mailto:lfpinhosousa@gmail.com}{lfpinhosousa@gmail.com}}

\subjclass[2020]{47L20, 46B45, 15A03, 46B87}

\keywords{Sequence spaces, operator ideals, lineability, spaceability}

\begin{abstract} 
This paper investigates advanced notions of lineability and spaceability within the frameworks of sequence spaces and operator ideals. We propose the notion of \emph{Standard Sequence Classes} to provide an environment that unifies numerous classical sequence spaces while preserving their fundamental behavior. Utilizing this framework, we establish general $(\alpha, \mathfrak{c})$-spaceability results for complements of unions of (quasi-)Banach sequence spaces. These results extend the existing literature by addressing the geometrically more demanding case where $\alpha > 1$ and by encompassing the non-locally convex (quasi-)Banach setting. Furthermore, we provide criteria for the pointwise $\mathfrak{c}$-spaceability of differences between general operator ideals with values in standard sequence spaces. Our results recover and improve several known findings in the context of vector-valued sequences.
\end{abstract}

\maketitle

\section{Introduction and Background}

The property known as \emph{lineability}, introduced by Aron, Gurariy, and Seoane-Sepúlveda in the seminal work \cite{Aron-Gurariy-Seoane}, refers to the search for linear structures within sets that, at first glance, appear to lack any linear character. More precisely, given a cardinal number $\mu$ and a linear space $V$, a subset $A \subset V$ is said to be $\mu$-\emph{lineable} if $A \cup \{0\}$ contains a $\mu$-dimensional linear subspace of $V$. When $V$ is a topological vector space and the subspace is closed, $A$ is called $\mu$\emph{-spaceable}. Since its formalization, lineability theory has been extensively developed in several directions. For a more in-depth study, we recommend \cite{bernal-jfa,BerPS,Gurariy-Quarta} and the book \cite{Lineability-book}.

In this work, we emphasize the setting of sequence spaces and operator ideals. As the theory evolved, more refined notions of lineability were introduced. Among them are \emph{pointwise lineability}, introduced in \cite{Pellegrino-Raposo}, and \emph{$(\alpha,\beta)$-lineability}, a notion with geometric flavor introduced in \cite{Favaro-Pellegrino-Tomaz}. Given cardinals $\alpha \leq \beta$ and a (topological) vector space $V$, a subset $A \subset V$ is said to be:

\vskip 2mm
\noindent$\bullet$ \emph{$(\alpha,\beta)$-lineable (spaceable)} if it is $\alpha$-lineable and, for every 
$\alpha$-dimensional subspace $U_\alpha \subset A \cup \{0\}$, there exists a $\beta$-dimensional (closed) subspace $U_\beta$ of $V$ such that
\(
U_\alpha \subset U_\beta \subset A \cup \{0\}
\) (see \cite[Definition 1.1]{Favaro-Pellegrino-Tomaz}).

\vskip 2mm
\noindent$\bullet$ \emph{pointwise $\alpha$-lineable (spaceable)} if, for every $x \in A$, there exists an $\alpha$-dimensional (closed) 
subspace $W_x$ with
\(
x \in W_x \subset A \cup \{0\}
\) (see \cite[Definition 2.1]{Pellegrino-Raposo}).
\vskip 2mm

The extensive literature on lineability and spaceability in sequence spaces has resolved many questions within the classical framework of the theory (see \cite{Araujo-Barbosa-Raposo-Ribeiro, Barroso-Botelho-Favaro-Pellegrino, Botelho-Cariello-Favaro-Pellegrino, Botelho-Diniz-Favaro-Pellegrino, Botelho-Favaro} and \cite{Nogueira-Pellegrino}). Nevertheless, several problems remain open. In \cite{Pellegrino-Raposo}, Pellegrino and Raposo established results concerning the pointwise $\mathfrak{c}$-spaceability of sets that generalize complements of unions of Banach and quasi-Banach sequence spaces. Since pointwise $\mathfrak{c}$-spaceability implies $(1,\mathfrak{c})$-spaceability, a natural question arises regarding the behavior of $(\alpha,\mathfrak{c})$-spaceability for $\alpha > 1$. Recently, Ara\'ujo et al.\ \cite{Araujo-Barbosa-Raposo-Ribeiro} provided general criteria for $(\alpha,\mathfrak{c})$-spaceability in complements of unions of Banach spaces. When applied to sequence spaces, these criteria partially address the aforementioned question.

One of the principal aims of the present work is to establish results concerning the geometric spaceability notion within the $(\alpha,\beta)$-setting for general sets similar in nature to those studied in \cite{Pellegrino-Raposo}. This endeavor provides answers for the challenging cases where $\alpha > 1$ and, crucially, addresses the setting of quasi-Banach spaces, which is not covered by the general criteria established in \cite{Araujo-Barbosa-Raposo-Ribeiro}. To achieve this, we introduce the concept of \emph{Standard Sequence Classes}, inspired by the sequence space notions in \cite{Albuquerque-Coleta} and the abstract framework of Sequence Classes developed by Botelho and Campos in \cite{Botelho-Campos}.  For a comprehensive overview of recent advances and applications within the environment of sequence classes, we refer the reader to \cite{Botelho-Campos-Nascimento, Botelho-Santiago-Repres, Campos-Nascimento-Sousa} and the references therein.

Standard sequence classes serve as a habitat for sequence classes that possess natural and well-behaved properties, which will be detailed later.  As an illustrative consequence of one of our main results (Theorem \ref{th-standardseq} and Corollary \ref{cor1}), consider a Banach space $E$, and $E$-valued quasi-Banach standard sequence classes $X$ and $Y_\lambda$ ($\lambda \in \Lambda$). If the set
$$
X(E) \setminus\bigcup_{\lambda \in \Lambda}Y_\lambda(E)
$$
is non-empty, then it is $(\alpha, \mathfrak{c})$-spaceable if and only if $\alpha < \aleph_0$.

In the context of operator ideals, several contributions have advanced the study of lineability and spaceability for various classes of operators. A seminal result in this area is due to Kitson and Timoney \cite{Kitson-Timoney}, who provided spaceability results for the difference of certain operator ideals on Banach spaces; however, the non-locally convex setting was not covered. Relevant contributions regarding spaceability results for the difference of operator classes can also be found in \cite{Alves-Turco,Araujo-Pellegrino,Pellegrino1,Favaro-Pellegrino-Rueda} and \cite{Puglisi}. In \cite{Sanchez}  Hern\'andez et al. investigated spaceability for the general difference $\mathcal{I}_1 \setminus \mathcal{I}_2$, where $\mathcal{I}_i$ ($i=1,2$) denotes a Banach operator ideal on Banach spaces satisfying specific properties. More recently, \cite{Albuquerque-Coleta} established results on pointwise $\mathfrak{c}$-spaceability in the setting of multilinear operators taking values in standard sequence spaces, focusing on particular classes of operators, such as bounded, multiple, and absolutely summing classes.

Motivated by these contexts, another main result established in this paper (Theorem \ref{th-opideals}) addresses the difference of a general (quasi-)Banach operator ideal with values in standard sequence classes. This result improves and generalizes several of the aforementioned previous findings. As a particular application, we provide the pointwise $\alpha$-spaceability of
\[
\mathcal{I}(E;\mathcal{S}) \setminus \Pi_{p,q}(E;\mathcal{S}),
\]
where $\mathcal{I}$ is a (quasi-)Banach operator ideal, $\mathcal{S}$ is a standard sequence space, $\Pi$ denotes the class of (absolutely) summing operators, and $p,q$ satisfy mild conditions.

The paper is organized as follows. In Section~\ref{SEC-SSC} we introduce the concept of standard sequence classes, present the main examples and basic properties, and define the sets to be studied in the subsequent sections. In Section~\ref{Sec-LIN-SSS} we prove the main $(\alpha,\beta)$-spaceability result for sets that generalize complements of unions of sequence spaces. Section~\ref{Sec-PL-OI} is devoted to the study of pointwise lineability in the context of general difference between operator ideals with values in standard sequence spaces. Finally, in Section~\ref{SEC-APLIC} we collect several applications of the main results established in the previous sections.

\subsection{Notations and terminology}

For Banach spaces $E$ and $F$ over $\mathbb{K} = \mathbb{R}$ or $\mathbb{C}$, $E^*$ denotes the topological dual of $E$, $B_E$ denotes the closed unit ball of $E$ and ${\mathcal L}(E;F)$ denotes the Banach space of bounded linear operators from $E$ to $F$ with the usual operator norm. The symbol $E \stackrel{C}{\hookrightarrow} F$ means that $E$ is a linear subspace of $F$ and $\|\cdot\|_F \leq C\|\cdot\|_E$ on $E$. We write $E \stackrel{1}{=}F $ if $E = F$ isometrically.

The spaces of eventually null and bounded $E$-valued sequences are denoted by $c_{00}(E)$ and $\ell_\infty(E)$, respectively. For $j \in \mathbb{N}$ and  $x \in E$, we write $x \cdot e_j$ for the sequence $(0,\dots, 0,x,0, 0,\dots )$, where $x$ is placed at the $j$-th coordinate. The symbol $(x_{j})_{j=1}^{n}$, where $x_1, \ldots, x_n \in E$, stands for the sequence $(x_{1},x_{2},\ldots,x_{n},0,0,\ldots) \in E^\mathbb{N}$. If $x = (x_j)_{j=1}^\infty \in E^\mathbb{N}$ is a sequence, in addition to the standard notation $x_j$ for the $j$-th coordinate of $x$, we will write $x(j)$ when more precision is needed. We will also use the notation $x|_{\mathbb{N}'}$ to represent the subsequence $(x_j)_{j \in \mathbb{N}'}$, where $\mathbb{N}'$ is an infinite subset of $\mathbb{N}$. When we refer to a partition  $\mathbb{N} = \mathbb{N}_1 \cup \mathbb{N}_2$, we mean that $\mathbb{N}_1$ and $\mathbb{N}_2$ are infinite and disjoint subsets of $\mathbb{N}$. The symbols $\aleph_0$ and $\mathfrak{c}$ denote the cardinalities of $\mathbb{N}$ and $\mathbb{R}$, respectively. We shall work under the Continuum Hypothesis, that is, we assume that no cardinal number lies strictly between $\aleph_0$ and $\mathfrak{c}$.

\section{Standard sequence classes}\label{SEC-SSC}

In this section we introduce our central object of study and its properties. The inspiration used in the definition is closely related to the studies developed in \cite{Albuquerque-Coleta} and \cite{Botelho-Campos}, where the concepts of Standard Sequence Spaces and Sequence Classes are defined, and in \cite{Nogueira-Pellegrino} and \cite{Pellegrino-Raposo}, where are defined the concept of Strongly Invariant Sequence Spaces.

\begin{definition}\rm \label{standard-sc}
A \emph{standard sequence class} $X$ is a rule that assigns to each Banach space $E$ a (quasi-)Banach space $X(E)$ of $E$-valued sequence space (endowed with the usual coordinatewise algebraic operations) satisfying:
\begin{enumerate}[(i)]
\item There exists a constant $C>0$ such that
$ c_{00}(E) \subseteq X(E) \stackrel{C}{\hookrightarrow} \ell_\infty(E)$.

\item (\emph{Subsequence stability}) If $x=(x_j)_{j\in\mathbb{N}}\in X(E)$ and $(x_{n_k})_{k\in\mathbb{N}}$ is a subsequence of $x$, then $(x_{n_k})_{k\in\mathbb{N}}\in X(E)$ and
\[ \|(x_{n_k})_{k\in\mathbb{N}}\|_{X(E)}\leq \|x\|_{X(E)}.\]

\item (\emph{Spreading zeros stability}) If $x = (x_j)_{j\in\mathbb{N}} \in X(E)$ and $\mathbb{N}'=\{n_1<n_2<n_3<\cdots\}$ is an infinite subset of $\mathbb{N}$, then the $E$-valued sequence $y$ defined by
\[
y = \sum_{i \in \mathbb{N}} x_i \cdot e_{n_i}
\]
belongs to $X(E)$ and satisfies $ \|y\|_{X(E)} \leq \| x \|_{X(E)}$.
\end{enumerate}
\end{definition}

Condition i) ensures that convergence in \(X(E)\) implies coordinatewise convergence, or equivalently, that the projections \(\pi_m : X(E) \to E\), defined by \(\pi_m(x) = x_m\), are continuous for all \(m \in \mathbb{N}\). Conditions (ii) and (iii) imply that $X(E)$ is closed under taking subsequences and also it is stable in adding zeros in its sequences (the spreading zeros property). Moreover, by conditions ii) and iii), since \(x\) is a subsequence of \(y\), one has \(\|y\|_{X(E)} = \|x\|_{X(E)}\). Any spaces of the form \(X(E)\), where \(E\) is a Banach space and \(X\) a standard sequence class, will be called \emph{standard sequence spaces}.

\begin{example}\rm\label{ex-seqclass}
Next, we list several classical sequence spaces for which the correspondence \(E \mapsto X(E)\) defines (or not) a standard sequence class.

\vskip 2mm

\noindent a) For $p \in (0,\infty)$, the classical sequence spaces 
$\ell_p(E)$, $\ell_p^u(E)$, and $\ell_p^w(E)$ 
(see \cite[Section~8]{Defant-Floret}).

\vskip 2mm

\noindent b)  The space $c_0(E)$ of null convergent sequences.

\vskip 2mm

\noindent c) For $0 < p, q < \infty$, the Lorentz space $\ell_{p,q}(E)$ (see \cite{Matos}), given by
\[
\ell_{p,q}(E) :=  
\left\{ (x_j)_{j=1}^{\infty} \in c_0(E) : 
\|(x_n)_{n=1}^\infty\|_{p,q} =
\left( \sum_{n=1}^\infty
\left( n^{\frac{1}{p}-\frac{1}{q}} \|x_{\Phi(n)}\| \right)^q 
\right)^{\!\frac{1}{q}} < \infty \right\},
\]
where $\Phi: \mathbb{N} \to \mathbb{N}$ is an injective mapping such that $\|x_{\Phi(1)}\| \geq \|x_{\Phi(2)}\| \geq \dots \geq 0$, and $\Phi^{-1}(n) \neq \emptyset$ whenever $x_n \neq 0$.

\vskip 2mm

\noindent d) An \textit{Orlicz function} is a convex and non-decreasing function $M: [0,\infty) \to [0,\infty)$ such that $M(0) = 0$ and $\lim_{t \to \infty} M(t) = \infty$. If, in addition, there exists $t \neq 0$ such that $M(t) = 0$, we say that $M$ is a \textit{degenerate Orlicz function}. For a non-degenerate Orlicz function $M$, with $M(1)=1$, we may consider the Orlicz sequence space $\ell_M(E)$ (see \cite{Campos-Sousa} and \cite[Section~4.a]{Lindenstrauss}) and $\ell_M^w(E)$ (see \cite[Section 2]{Campos-Sousa}), given by
\[
\ell_M(E) :=
\biggl\lbrace (x_j)_{j=1}^\infty \in E^{\mathbb{N}} : 
\sum_{j=1}^\infty M\!\left( \frac{\|x_j\|}{\eta}\right) < \infty 
\text{ for some } \eta > 0
\biggr\rbrace.
\]
and
\[
\ell_M^w(E) :=
\biggl\lbrace (x_j)_{j=1}^\infty \in E^{\mathbb{N}} : 
(\varphi(x_j))^\infty_{j=1} \in \ell_M
\text{ for all } \varphi \in E'
\biggr\rbrace.
\]

\noindent e) For $1 \leq p < \infty$, the space $\ell_p\langle E\rangle$ of Cohen strongly $p$-summable sequences (see \cite{Cohen}), and the space $\ell_p^{\mathrm{mid}}(E)$ of mid $p$-summable sequences (see \cite{Botelho-Campos-Santos}), given by
\[
\ell_p\langle E \rangle :=  
\left\{ (x_j)_{j=1}^{\infty} \in E^{\mathbb{N}} : 
\|(x_j)_{j=1}^\infty\|_{\ell_p\langle E \rangle}
= \sup_{(\varphi_j)_{j=1}^\infty \in B_{\ell_{p^*}^w(E')}} 
\|(\varphi_j(x_j))_{j=1}^\infty\|_1 < \infty
\right\},
\]
and
\[
\ell_p^{\mathrm{mid}}(E) :=  
\left\{ (x_j)_{j=1}^{\infty} \in E^{\mathbb{N}} : 
\|(x_j)_{j=1}^{\infty}\|_{p,\mathrm{mid}} 
= \sup_{(\varphi_n)_{n=1}^\infty \in B_{\ell_p^w(E')}} 
\left(\sum_{n = 1}^\infty\sum_{j = 1}^\infty 
|\varphi_n(x_j)|^p \right)^{1/p} < \infty
\right\}.
\]

\noindent f) The spaces $c(E)$ (see \cite[p.~33]{Diestel}) and 
\[
G(E)=\{(x_n)_{n=1}^\infty \in \ell_\infty(E) : x_{2n-1}=x_{2n}\}
\]
are examples of sequence spaces that do not satisfy all the conditions in Definition \ref{standard-sc}. Indeed, for any $x \in E \setminus \{0\}$ and $\mathbb{N}'=\{2n : n \in \mathbb{N}\}$, we have $(x,x,\dots) \in c(E)$ but $(x,0,x,0,\dots) \notin c(E)$. Conversely, although $(x,0,0,\dots) \in c_{00}(E)$, this sequence does not belong to $G(E)$ and so $c_{00}(E) \not\subset G(E)$.
\end{example}

To avoid ambiguity, we shall occasionally denote a standard sequence class by \( X(\cdot) \). For example, we simply write \( \ell_p^w \) instead of \( \ell_p^w(\cdot) \). However, when referring to the sequence class \( \ell_p(\cdot) \), we keep the argument to distinguish it from the scalar sequence space \( \ell_p \).

We first prove a simple result for `nested' standard sequence classes, adapting the ideas from \cite[Lemma 3.2]{Pellegrino-Raposo}. Let $E$ be a Banach space and $\{X_\lambda\}_{\lambda \in \Lambda}$ a family of standard sequence classes. This family is said to be \emph{$E$-nested} if for every $\lambda_1, \lambda_2 \in \Lambda$, either $X_{\lambda_1}(E) \subseteq X_{\lambda_2}(E)$ or $X_{\lambda_2}(E) \subseteq X_{\lambda_1}(E)$. The family $\{X_\lambda\}_{\lambda \in \Lambda}$ is called \emph{nested} if it is $E$-nested for every Banach space $E$.

\begin{proposition} \label{lema-restricoes}
Let $\{X_\lambda\}_{\lambda \in \Lambda}$ be an $E$-nested family of standard sequence classes.
\begin{enumerate}[i)]
\item Given a partition $\mathbb{N} = \mathbb{N}' \cup \mathbb{N}''$ and $y \in E^{\mathbb{N}}$ such that $y \notin \bigcup_{\lambda \in \Lambda}X_\lambda(E)$, then 
\[
y|_{\mathbb{N}'} \notin \bigcup_{\lambda \in \Lambda}X_\lambda(E)
\quad \textrm{or} \quad
y|_{\mathbb{N}''} \notin \bigcup_{\lambda \in \Lambda}X_\lambda(E),
\]

\item Let $x^{(1)},\dots, x^{(n)} \in E^\mathbb{N}$ such that $x^{(1)},\dots, x^{(n)} \notin \bigcup_{\lambda \in \Lambda}X_\lambda(E)$. Then
\[
x^{(k)}|_{\mathbb{N}'} \notin \bigcup_{\lambda \in \Lambda}X_\lambda(E),\ \text{ for all } k=1,\ldots,n,
\]
for some partition  $\mathbb{N} = \mathbb{N}' \cup \mathbb{N}''$.
\end{enumerate}
\end{proposition}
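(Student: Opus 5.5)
The plan is to prove i) by contradiction, using linearity together with the spreading zeros property, and then to derive ii) by induction on $n$ from i) plus a monotonicity observation.

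\emph{Proof of i).} Suppose both $y|_{\mathbb{N}'}$ and $y|_{\mathbb{N}''}$ lie in the union. Then $y|_{\mathbb{N}'}\in X_{\lambda_1}(E)$ and $y|_{\mathbb{N}''}\in X_{\lambda_2}(E)$ for some $\lambda_1,\lambda_2\in\Lambda$. Since the family is $E$-nested, both sequences belong to a single space $X_\lambda(E)$, namely the larger of the two. Write $\mathbb{N}'=\{n_1<n_2<\cdots\}$ and $\mathbb{N}''=\{m_1<m_2<\cdots\}$, so that $y|_{\mathbb{N}'}=(y_{n_i})_{i}$ and $y|_{\mathbb{N}''}=(y_{m_i})_i$. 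By spreading zeros stability (condition (iii) of Definition \ref{standard-sc}), both
\[
u=\sum_i y_{n_i}\cdot e_{n_i}\quad\text{and}\quad v=\sum_i y_{m_i}\cdot e_{m_i}
\]
belong to $X_\lambda(E)$. Since $\mathbb{N}'$ and $\mathbb{N}''$ partition $\mathbb{N}$, we have $y=u+v$ coordinatewise. As $X_\lambda(E)$ is a linear space, $y\in X_\lambda(E)$, which is a contradiction.

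\emph{Monotonicity observation.} If $S\subseteq T$ are infinite subsets of $\mathbb{N}$ and $x|_S\notin\bigcup_\lambda X_\lambda(E)$, then $x|_T\notin\bigcup_\lambda X_\lambda(E)$. Indeed, $x|_S$ is a subsequence of $x|_T$, so subsequence stability (condition (ii)) transfers membership in any $X_\lambda(E)$ from $x|_T$ to $x|_S$. Note also that i) applies to any $x|_M$ with $M$ infinite: identify $M$ with $\mathbb{N}$ through its increasing enumeration. A partition of $M$ into two infinite sets then yields restrictions that are exactly the restrictions of $x$ to those sets.

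\emph{Proof of ii), by induction on $n$.} We prove the statement with the partition $\mathbb{N}=\mathbb{N}'\cup\mathbb{N}''$ into two infinite sets.

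For $n=1$, take any partition of $\mathbb{N}$ into two infinite sets and apply i) to $x^{(1)}$. Call the good part $\mathbb{N}'$ and the other part $\mathbb{N}''$.

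For the inductive step, let $\mathbb{N}=\mathbb{N}'\cup\mathbb{N}''$ be a partition that works for $x^{(1)},\dots,x^{(n)}$, and apply i) to $x^{(n+1)}$ with this partition. If $x^{(n+1)}|_{\mathbb{N}'}$ is outside the union, we are done.

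Otherwise $x^{(n+1)}|_{\mathbb{N}''}$ is outside the union. Split $\mathbb{N}''=B_1\cup B_2$ into two infinite sets. By i) applied to $x^{(n+1)}|_{\mathbb{N}''}$, after relabeling we may assume $x^{(n+1)}|_{B_1}$ is outside the union. Now take the new partition
\[
\mathbb{N}=(\mathbb{N}'\cup B_1)\cup B_2 .
\]
Both pieces are infinite. The set $\mathbb{N}'\cup B_1$ contains $\mathbb{N}'$ and contains $B_1$. By the monotonicity observation, $x^{(k)}|_{\mathbb{N}'\cup B_1}$ is outside the union for $k\le n$ (because it holds on $\mathbb{N}'$), and also for $k=n+1$ (because it holds on $B_1$). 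This completes the induction.

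The main obstacle is ii): applying i) to each $x^{(k)}$ separately would select different halves for different $k$. The monotonicity observation fixes this, since it lets us enlarge the good set while keeping an infinite complement.
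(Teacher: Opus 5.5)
Your proof is correct and follows the paper's approach. For (i), you write $y$ as the sum of the two zero-spread restrictions, which lie in a common $X_\lambda(E)$ by nestedness; for (ii), you use the same iterative enlargement $\mathbb{N}_1\cup\mathbb{N}_2\cup\cdots$ of the good set, with your explicit monotonicity observation (via subsequence stability) and re-indexing remark spelling out steps the paper leaves implicit.
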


\begin{proof}
Item i) is proved in \cite[Lemma 3.2]{Pellegrino-Raposo}. Briefly, the argument is as follows: if both $y|_{\mathbb{N}'}$ and $y|_{\mathbb{N}''}$ belong to $\cup_{\lambda \in \Lambda}X_\lambda(E)$, then the properties of standard sequence classes yield sequences $y_1, y_2 \in \cup_{\lambda \in \Lambda}X_\lambda(E)$ such that $y = y_1 + y_2$. This implies $y \in \cup_{\lambda \in \Lambda}X_\lambda(E)$, a contradiction.

\vskip 2mm

Now we prove ii): starting from $x^{(1)} \notin \bigcup_{\lambda \in \Lambda}X_\lambda(E)$, applying the first item, we can consider a partition $\mathbb{N} = \mathbb{N}_1 \cup \widetilde{\mathbb{N}}_1$ such that $x^{(1)}|_{\mathbb{N}_1} \notin \bigcup_{\lambda \in \Lambda}X_\lambda(E)$. Applying item i) once again, we have 
$$
x^{(2)}|_{\mathbb{N}_1} \notin \bigcup_{\lambda \in \Lambda}X_\lambda(E)\ \text{or}\ x^{(2)}|_{\widetilde{\mathbb{N}}_1} \notin \bigcup_{\lambda \in \Lambda}X_\lambda(E)
$$ 
and if $x^{(2)}|_{\mathbb{N}_1} \notin \bigcup_{\lambda \in \Lambda}X_\lambda(E)$ there is nothing to do. Assuming that $x^{(2)}|_{\widetilde{\mathbb{N}}_1} \notin \bigcup_{\lambda \in \Lambda}X_\lambda(E)$, we proceed in the same way above and consider a partition $\widetilde{\mathbb{N}}_1 = \mathbb{N}_2 \cup \widetilde{\mathbb{N}}_2$ for which $x^{(2)}|_{\mathbb{N}_2} \notin \bigcup_{\lambda \in \Lambda}X_\lambda(E)$. From this we have  \[x^{(1)}|_{\mathbb{N}_1 \cup \mathbb{N}_2}, x^{(2)}|_{\mathbb{N}_1 \cup \mathbb{N}_2} \notin \bigcup_{\lambda \in \Lambda}X_\lambda(E).\] 
We can continue this procedure to obtain $\mathbb{N}' = \bigcup_{i=1}^k \mathbb{N}_i$ ($k \le n$) and a partition $\mathbb{N} = \mathbb{N}' \cup \mathbb{N}''$ such that $x^{(1)}|_{\mathbb{N}'}, \dots , x^{(n)}|_{\mathbb{N}'} \notin \bigcup_{\lambda \in \Lambda}X_\lambda(E)$.
\end{proof}

A suitable notion of compatibility for maps will be useful in this section. Let $E$ and $F$ be Banach spaces and $f: E \to F$ be a map. Denote by $\widehat{f}$ the induced map $\widehat{f}: E^\mathbb{N} \to F^\mathbb{N}$, given by
\[
\widehat{f}\big( (x_j)_{j=1}^\infty \big) := \left( f(x_j) \right)_{j=1}^\infty \quad \text{for each } (x_j)_{j=1}^\infty \in E^\mathbb{N}.
\]
Adapting \cite[Definition 2.3]{Nogueira-Pellegrino} to a standard sequence class $X$, a map $f :E \to F$ with $f(0)=0$ is called \emph{compatible with $X$} if for every $(x_j)_{j=1}^\infty \in E^\mathbb{N}$ and all $\alpha \in \mathbb{K}\setminus\{0\}$,
\[
(f(x_j))_{j=1}^\infty \notin X(F) \Rightarrow (f(\alpha x_j))_{j=1}^\infty \notin X(F).
\]
We introduce a stronger notion: a map $f :E \to F$ with $f(0)=0$ is called \emph{linearly compatible with $X$} if it is compatible (with $X$) and satisfies the additional property that for any $(x_j)_{j=1}^\infty, (y_j)_{j=1}^\infty \in E^\mathbb{N}$,
\[
(f(x_j))_{j=1}^\infty, (f(y_j))_{j=1}^\infty \in X(F) \Rightarrow (f(x_j + y_j))_{j=1}^\infty \in X(F).
\]

Obvious consequences are listed next: if $f$ is compatible with $X$, then for all $(x_j)^\infty_{j=1} \in E^\mathbb{N}$ and $\alpha \in \mathbb{K}\setminus\{0\}$,
\begin{quote} 
\begin{enumerate}[i)]
%\item [i)] $(f(\alpha x_j))^\infty_{j=1} \in X(F)\Rightarrow (f( x_j))^\infty_{j=1} \in X(F)$,
\item $(f(x_j))^\infty_{j=1} \in X(F)\Rightarrow (f(\alpha x_j))^\infty_{j=1} \in X(F)$;
\item $(x_j)^\infty_{j=1} \in \widehat{f}^{-1}(X(F)) \Rightarrow \alpha(x_j)^\infty_{j=1} \in \widehat{f}^{-1}(X(F))$;
\end{enumerate}
\end{quote}
and if $f$ is linearly compatible with $X$ we obtain 
\begin{quote}
\begin{enumerate}
\item [iii)] $(x_j)^\infty_{j=1}, (y_j)^\infty_{j=1} \in \widehat{f}^{-1}(X(F)) \Rightarrow (x_j)^\infty_{j=1} + (y_j)^\infty_{j=1}\in \widehat{f}^{-1}(X(F))$ 
\end{enumerate}
\end{quote}
for all $(x_j)^\infty_{j=1}, (y_j)^\infty_{j=1} \in E^\mathbb{N}$. In other words, ii) and iii) tell us that $\widehat{f}^{-1}(X(F))$ is a linear subspace of $E^\mathbb{N}$ when $f$ is linearly compatible with $X$.

We recall a modern terminology from \cite{Botelho-Favaro} and \cite{Pellegrino-Raposo}. In what follows, for $X$ a standard sequence class, $f : E \to F$ an arbitrary map and $\{Y_\lambda\}_{\lambda \in \Lambda}$ a family of standard sequence classes, we will consider sets of the form
\[
G(X,f,\{Y_\lambda\}_{\lambda \in \Lambda})
:=
\left\lbrace x = (x_j)^\infty_{j=1} \in X(E) : \widehat{f}(x) = (f(x_j))^\infty_{j=1} \notin \bigcup_{\lambda \in \Lambda}Y_\lambda(F) \right\rbrace.
\]

Our upcoming results provide necessary conditions to obtain $(\alpha,\mathfrak{c})$-spaceability in sets of this nature. To this end, we require a condition that characterizes the behavior of a sequence $x \in G(X,f,Y)$ (that is, $\widehat{f}(x) \notin Y(F)$) in such a way that one can extract a subsequence whose image under $f$ lies in $Y(F)$. We refer to this property as \emph{detachable}. The formal definition, along with illustrative examples, is provided below.

\begin{definition}\rm
Let $X$ and $Y$ be standard sequence classes and $f:E \to F$ be a map linearly compatible with $Y$. A sequence $(x_j)_{j \in \mathbb{N}} \in G(X,f,Y)$ is said to be \emph{detachable} if there exists an infinite subset $\mathbb{N}_1 \subset \mathbb{N}$ whose complement $\mathbb{N} \setminus \mathbb{N}_1$ is also infinite and such that $(f(x_n))_{n \in \mathbb{N}_1} \in Y(F)$. We say that the triple $(X,f,Y)$ is detachable if all  $(x_j)_{j \in \mathbb{N}} \in G(X,f,Y)$ are detachable. The triple $(X,f,\{Y_\lambda\}_{\lambda \in \Lambda})$ is said to be detachable if $(X,f,Y_\lambda)$ is detachable for all ${\lambda \in \Lambda}$.
\end{definition}

\begin{remark}\rm
a) Clearly, the notion ``detachable'' is meaningful for a triple $(X,f,Y)$ only when $G(X,f,Y)$ is nonempty. Throughout, we therefore implicitly impose this nonemptiness condition.

\vskip 2mm

\noindent b) It follows immediately from  Proposition \ref{lema-restricoes} that if the triple $(X,f,Y)$ is detachable, then $(f(x_n))_{n \in \mathbb{N}_1} \in Y(F)$ and $(f(x_n))_{n \in \mathbb{N}_2} \notin Y(F)$ for all  $(x_j)_{j \in \mathbb{N}} \in G(X,f,Y)$, with $\mathbb{N}_2 = \mathbb{N} \setminus \mathbb{N}_1$.
\end{remark}

\begin{example}\label{ex-detach}\rm
Examples of detachable and non-detachable triples $(X,f,Y)$ are provided next.

\vskip 2mm

\noindent a) Let $p \in (0,\infty)$ and let $Id_p: \ell_p \to \ell_p$ be the identity operator in $\ell_p$. Then, $(\ell_{p^*}^w, Id_p, \ell_{p^*}(\cdot))$ is not detachable. Note that $(e_j)_{j=1}^\infty \in G(\ell_{p^*}^w, Id_p, \ell_{p^*}(\cdot)) = \ell_{p^*}^w(\ell_{p})\setminus \ell_{p^*}(\ell_{p})$. However, since $\|e_j\|_p = 1$ for every $j \in \mathbb{N}$, there is no subsequence of $(e_j)_{j=1}^\infty$ in  $\ell_{p^*}(\ell_p)$.

\vskip 2mm

\noindent b) Let $Y \in \{ \ell_p(\cdot),\ell_{p,q}(\cdot) ,\ell_p^w\}$  for $0<p,q<\infty$ or $Y=\ell_M(\cdot)$ for an Orlicz function $M$. If $f:E \to F$ is a map linearly compatible with $Y$ and  continuous at $0$, $X$ is a standard sequence class such that $X(E) \subseteq c_0(E)$ and $G(X,f,Y) \neq \emptyset$, then $(X,f,Y)$ is detachable. To prove our assertion, first note that for every $(x_j)^\infty_{j=1} \in X(E)$ we have $(f(x_j))^\infty_{j=1} \in c_0(F)$. Thus,\\
i) The case $Y= \ell_p(\cdot)$ follows the lines of the scalar case proved in \cite[Theorem 2.2]{Favaro-Pellegrino-Tomaz} as well as the case $Y = \ell_{p,q}(\cdot)$. Both will therefore be omitted. \\
ii) For $Y = \ell_p^w$ with $0<p<\infty$, consider $(x_j)_{j=1}^\infty \in G(X,f,\ell_p^w)$. Since $(f(x_j))_{j=1}^\infty \in c_0(F)$, there exists a partition  $\mathbb{N}=  \mathbb{N}_1 \cup \mathbb{N}_2$ such that
$$
|\varphi(f(x_{k}))| \leq \|\varphi\|\|f(x_{k})\| < \|\varphi\|\frac{1}{2^k},\quad \forall\, \varphi \in F',\, \forall\, k \in \mathbb{N}_1.
$$
Thus, for all $\varphi \in F'$, we have
$$
\sum_{k \in \mathbb{N}_1} |\varphi(f(x_{k}))|^p \leq \|\varphi\|\sum_{k \in \mathbb{N}_1} \left(\frac{1}{2^k}\right)^p < \infty,
$$
which implies that  $(f(x_{k}))_{k \in \mathbb{N}_1} \in \ell_p^w(F)$.\\
iii) Let $M$ be an Orlicz function and let $(x_j)^\infty_{j=1} \in G(X,f,\ell_M(\cdot))$. As we have $(f(x_j))^\infty_{j=1} \in c_0(F) \setminus \ell_M(F)$, there exists a partition $\mathbb{N}=  \mathbb{N}_1 \cup \mathbb{N}_2$ such that
$$
\|f(x_{k})\| < \frac{1}{2^k}, \ k \in \mathbb{N}_1.
$$
Since $M$ is a convex function, we have
$$
M(\alpha x + (1-\alpha) y ) \leq \alpha M(x) + (1-\alpha)M(y)
$$
for all $x,y \in [0,\infty)$ and $\alpha \in [0,1]$. Given an arbitrary real number \(\eta >0 \) and choosing
$$
\alpha = \frac{1}{2^k},\ x = \frac{1}{\eta}\ \text{and}\ y = 0,
$$
we obtain
$$
M\left( \frac{1}{2^k \eta}\right) \leq \frac{1}{2^k} M\left( \frac{1}{\eta}\right),\ \forall k \in \mathbb{N}_1.
$$
Thus
$$
\sum_{k \in \mathbb{N}_1 }M\left(\frac{\|f(x_{k})\|}{\eta} \right) \leq \sum_{k \in \mathbb{N}_1}M\left(\frac{1}{2^k\eta} \right) \leq \sum_{k \in \mathbb{N}_1}^{\infty}\frac{1}{2^k} M\left( \frac{1}{\eta}\right) < \infty
$$
and we conclude that $(f(x_{k}))_{k \in \mathbb{N}_1} \in \ell_M(F)$.
\end{example}

\begin{proposition}
Let $X, Y_1$ and $Y_2$ be standard sequence classes and let $f:E\to F$ be a linearly compatible map with $Y_1$ and $Y_2$. If $(X,f,Y_1)$ is detachable and $Y_1(E) \subseteq Y_2(E)$, then $(X,f,Y_2)$ is detachable.
\end{proposition}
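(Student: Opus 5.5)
The plan is to unwind the definitions directly. Take an arbitrary $x=(x_j)_{j\in\mathbb{N}}\in G(X,f,Y_2)$. We must produce an infinite $\mathbb{N}_1\subset\mathbb{N}$ with infinite complement such that $(f(x_n))_{n\in\mathbb{N}_1}\in Y_2(F)$. (The hypothesis is written as $Y_1(E)\subseteq Y_2(E)$; since the sequences in question are $F$-valued, I read it as $Y_1(F)\subseteq Y_2(F)$, or assume it holds for the relevant space, with $E=F$ as the intended case.)

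First I will show that $G(X,f,Y_2)\subseteq G(X,f,Y_1)$. If $x\in X(E)$ and $\widehat{f}(x)\notin Y_2(F)$, then $\widehat{f}(x)\notin Y_1(F)$, since $Y_1(F)\subseteq Y_2(F)$. Hence $x\in G(X,f,Y_1)$, and in particular $G(X,f,Y_1)\neq\emptyset$ whenever $G(X,f,Y_2)\neq\emptyset$, in keeping with the standing nonemptiness convention.

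Next, because $(X,f,Y_1)$ is detachable, $x$ is detachable as an element of $G(X,f,Y_1)$. So there is an infinite $\mathbb{N}_1\subset\mathbb{N}$ with $\mathbb{N}\setminus\mathbb{N}_1$ infinite and $(f(x_n))_{n\in\mathbb{N}_1}\in Y_1(F)$. Applying the inclusion once more gives $(f(x_n))_{n\in\mathbb{N}_1}\in Y_2(F)$. This is exactly the detachability of $x$ in $G(X,f,Y_2)$. Since $x$ was arbitrary, the triple $(X,f,Y_2)$ is detachable. The requirement that $f$ be linearly compatible with $Y_2$ is part of the definition of a detachable triple and is given as a hypothesis.

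There is no real obstacle here. The only point needing care is that the inclusion is used at the level of $F$-valued sequences, and that it is used twice in opposite directions: contrapositively for the complements (not in $Y_2$ implies not in $Y_1$), and directly for the detached subsequence (in $Y_1$ implies in $Y_2$).
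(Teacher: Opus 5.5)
Your argument is correct. It is the direct definition-unwinding the paper leaves implicit, since the paper states this proposition without proof: the contrapositive of the inclusion gives $G(X,f,Y_2)\subseteq G(X,f,Y_1)$, and the detached subsequence lying in $Y_1(F)$ then lies in $Y_2(F)$. You are also right to read the hypothesis as $Y_1(F)\subseteq Y_2(F)$, as in the paper's follow-up example where the inclusions hold for every Banach space; an inclusion only at the level of $E$ would say nothing about the $F$-valued sequences $\widehat{f}(x)$.
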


For example, since $\ell_p(E) \subseteq \ell_p^u(E)$ and $\ell_p(E) \subseteq \ell_q(E)$ for all Banach space $E$ an all $p \le q \in (0, \infty]$, it follows that the triples $(X,f,\ell_p^u)$ and $(X,f,\ell_q(\cdot))$ are detachable for every standard sequence class $X$, whenever $(X,f,\ell_p(\cdot))$ is detachable.

The detachable notion is compatible with a finite number of sequences as demonstrated in the following proposition.

\begin{proposition} \label{lema-restricoes-in}
Let $X$ be a standard sequence class, $\{Y_\lambda\}_{\lambda \in \Lambda}$ be an $F$-nested family of standard sequence classes,  $f:E \to F$ be a map linearly compatible with every $Y_\lambda,\, \lambda \in \Lambda$, and $x^{(1)},\dots, x^{(n)} \in G(X,f,\{Y_\lambda\}_{\lambda \in \Lambda})$. Suppose that the triple $(X,f,Y_{\widetilde{\lambda}})$ is detachable for some $\widetilde{\lambda} \in \Lambda$. Then there exists a partition  $\mathbb{N} = \mathbb{N}' \cup \mathbb{N}''$ such that
\[
\widehat{f}(x^{(k)})|_{\mathbb{N}'} \in \bigcup_{\lambda \in \Lambda}Y_\lambda(F)
\quad \text{and} \quad
\widehat{f}(x^{(k)})|_{\mathbb{N}''} \notin \bigcup_{\lambda \in \Lambda}Y_\lambda(F),
\quad \text{for all} \ k=1,\dots,n.
\]
\end{proposition}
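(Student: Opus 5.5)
We build the partition inductively, handling the sequences $x^{(1)},\dots,x^{(n)}$ one at a time and using detachability of $(X,f,Y_{\widetilde{\lambda}})$ at each step. Write $Y:=Y_{\widetilde{\lambda}}$.

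\emph{Reduction.} Since $x^{(k)}\in G(X,f,\{Y_\lambda\}_{\lambda\in\Lambda})$, we have $\widehat f(x^{(k)})\notin Y(F)$, so $x^{(k)}\in G(X,f,Y)$. By subsequence stability of $X$, any restriction $x^{(k)}|_{\mathbb{M}}$ to an infinite $\mathbb{M}\subset\mathbb{N}$, re-indexed increasingly, lies in $X(E)$. The detachability definition refers to sequences indexed by $\mathbb{N}$, so it applies to such restrictions via the increasing bijection $\mathbb{N}\to\mathbb{M}$.

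\emph{Induction.} Set $\mathbb{M}_0=\mathbb{N}$. Suppose $\mathbb{M}_{k-1}$ is infinite and
\[
\widehat f(x^{(i)})|_{\mathbb{M}_{k-1}}\notin \textstyle\bigcup_\lambda Y_\lambda(F)\quad\text{for all } i.
\]
For $k=0$ this is the hypothesis. Then $x^{(k)}|_{\mathbb{M}_{k-1}}\in G(X,f,Y)$, and detachability gives an infinite, co-infinite (relative to $\mathbb{M}_{k-1}$) set $A_k\subset\mathbb{M}_{k-1}$ with $\widehat f(x^{(k)})|_{A_k}\in Y(F)$.

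We cannot simply pass to $\mathbb{M}_{k-1}\setminus A_k$, because we must keep the non-membership of the other sequences. Instead we combine detachability with Proposition \ref{lema-restricoes} ii), applied inside $\mathbb{M}_{k-1}$ (re-indexed as $\mathbb{N}$) to the finitely many sequences $\widehat f(x^{(i)})|_{\mathbb{M}_{k-1}}$. Since $Y_\lambda(F)$ are standard sequence spaces, non-membership is preserved by re-indexing. This yields a simpler route, which replaces the induction.

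\emph{Simpler route.} First apply Proposition \ref{lema-restricoes} ii) to $\widehat f(x^{(1)}),\dots,\widehat f(x^{(n)})$. This gives a partition $\mathbb{N}=\mathbb{P}\cup\mathbb{Q}$ with $\widehat f(x^{(k)})|_{\mathbb{P}}\notin\bigcup_\lambda Y_\lambda(F)$ for all $k$.

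Next, for $\mathbb{Q}$ we want all $\widehat f(x^{(k)})|_{\mathbb{Q}'}\in Y(F)$ for some infinite $\mathbb{Q}'$. We apply detachability successively: $x^{(1)}$ restricted to $\mathbb{Q}$ gives $\mathbb{Q}_1\subset\mathbb{Q}$ with $\widehat f(x^{(1)})|_{\mathbb{Q}_1}\in Y(F)$. Then $x^{(2)}|_{\mathbb{Q}_1}$ either already has image in $Y(F)$, or it lies in $G(X,f,Y)$ and we detach again to get $\mathbb{Q}_2\subset\mathbb{Q}_1$. Membership of $\widehat f(x^{(1)})$ in $Y(F)$ persists on subsets by subsequence stability. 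After $n$ steps we have an infinite $\mathbb{Q}_n$ on which all images lie in $Y(F)\subset\bigcup_\lambda Y_\lambda(F)$.

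Finally set $\mathbb{N}'=\mathbb{Q}_n$ and $\mathbb{N}''=\mathbb{N}\setminus\mathbb{Q}_n\supset\mathbb{P}$. Since $\widehat f(x^{(k)})|_{\mathbb{P}}$ is a subsequence of $\widehat f(x^{(k)})|_{\mathbb{N}''}$, subsequence stability of each $Y_\lambda$ gives $\widehat f(x^{(k)})|_{\mathbb{N}''}\notin\bigcup_\lambda Y_\lambda(F)$. Both parts are infinite, since $\mathbb{Q}_n$ is co-infinite by containing no point of $\mathbb{P}$.

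\emph{Main obstacle.} The delicate point is the bookkeeping with re-indexed subsequences. We must check that membership and non-membership in $Y_\lambda(F)$ depend only on the ordered subsequence. This follows from conditions (ii) and (iii) of Definition \ref{standard-sc}: the spread-zeros sequence has the same membership as the subsequence. With that, restrictions behave as genuine sequences and detachability applies to them.
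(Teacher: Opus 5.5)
Your ``simpler route'' is correct and is essentially the paper's own argument. You apply Proposition \ref{lema-restricoes} ii) to get an infinite set on which every $\widehat{f}(x^{(k)})$ escapes $\bigcup_{\lambda}Y_\lambda(F)$, detach successively for $x^{(1)},\dots,x^{(n)}$ inside its complement along a nested chain (using subsequence stability), and take $\mathbb{N}''$ to be the complement of the final set, which contains the ``bad'' set. You can delete the abandoned induction paragraph. For $x^{(1)}$, make the same case split you make for $x^{(2)}$, as the paper does: if $\widehat{f}(x^{(1)})|_{\mathbb{Q}}\in Y_{\widetilde{\lambda}}(F)$ already, simply take $\mathbb{Q}_1=\mathbb{Q}$.
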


\begin{proof}
First we apply Proposition \ref{lema-restricoes} to obtain a partition $\mathbb{N} = \widetilde{\mathbb{N}}_1 \cup (\mathbb{N}\setminus\widetilde{\mathbb{N}}_1)$ that fulfills
\begin{equation*}
\widehat{f}(x^{(1)})|_{\mathbb{N}\setminus\widetilde{\mathbb{N}}_1}, \dots , \widehat{f}(x^{(n)})|_{\mathbb{N}\setminus\widetilde{\mathbb{N}}_1} \notin \bigcup_{\lambda \in \Lambda}Y_\lambda(F).
\end{equation*}
By the detachment of \((X, f, Y_{\widetilde{\lambda}})\), there exists a partition \(\widetilde{\mathbb{N}}_1 = \mathbb{N}_1 \cup \mathbb{N}_1'\) such that  
\[
\widehat{f}(x^{(1)})|_{\mathbb{N}_1} \in Y_{\widetilde{\lambda}}(F).
\]
Indeed, if \(\widehat{f}(x^{(1)})|_{\widetilde{\mathbb{N}}_1} \notin Y_{\widetilde{\lambda}}(F)\), one applies the detachable property of \((X, f, Y_{\widetilde{\lambda}})\); otherwise, one may simply take \(\widetilde{\mathbb{N}}_1 = \mathbb{N}_1\). Using the same argument for $\widehat{f}(x^{(2)})$, there exists a partition $\mathbb{N}_1 = \mathbb{N}_2 \cup \mathbb{N}_2'$ such that
$$
\widehat{f}(x^{(1)})|_{\mathbb{N}_2}, \widehat{f}(x^{(2)})|_{\mathbb{N}_2} \in Y_{\widetilde{\lambda}}(F).
$$
Proceeding in this manner, we can take $\mathbb{N}' := \mathbb{N}_n$ (if necessary) satisfying
$$
\widehat{f}(x^{(k)})|_{\mathbb{N}'} \in Y_{\widetilde{\lambda}}(F) \subseteq \bigcup_{\lambda \in \Lambda}Y_\lambda(F)
\quad \text{for all} \ k=1,\dots,n.
$$ 
and, setting $\mathbb{N}'' := \mathbb{N}\setminus \mathbb{N}' = (\mathbb{N}\setminus\widetilde{\mathbb{N}}_1) \cup \left(\bigcup_{i=1}^n\mathbb{N}_{i}' \right)$, where $k \leq n$, we have
$$
\widehat{f}(x^{(k)})|_{\mathbb{N}''} \notin \bigcup_{\lambda \in \Lambda}Y_\lambda(F)
\quad \text{for all} \ k=1,\dots,n.
$$
\end{proof}

\section{Geometric spaceability in standard sequence spaces} \label{Sec-LIN-SSS}

We begin by recalling the basic notions of lineability needed in this section. For further details, see the seminal works \cite{Lineability-book,Aron-Gurariy-Seoane,Gurariy-Quarta}, as well as the geometric notions introduced in \cite{Favaro-Pellegrino-Tomaz,Pellegrino-Raposo}. Let $\mu$ be a cardinal number and $V$ a linear space. A subset $A \subset V$ is \emph{$\mu$-lineable} if $A \cup \{0\}$ contains a $\mu$-dimensional linear subspace of $V$. If $V$ is a topological vector space and $A \cup \{0\}$ contains a closed $\mu$-dimensional subspace, then $A$ is \emph{$\mu$-spaceable}. When $\dim(V)=\mu$, we say that $A$ is \emph{$\mu$-maximal lineable} or \emph{$\mu$-maximal spaceable}, respectively.

Given cardinals $\alpha \leq \beta$, the set $A$ is \emph{$(\alpha,\beta)$-lineable (spaceable)} if it is $\alpha$-lineable and, for every 
$\alpha$-dimensional subspace $U_\alpha \subset A \cup \{0\}$, there exists a $\beta$-dimensional (closed) subspace $U_\beta$ of $V$ such that
\(
U_\alpha \subset U_\beta \subset A \cup \{0\}.
\)
Moreover, $A$ is \emph{pointwise $\alpha$-lineable (spaceable)} if, for every $x \in A$, there exists an $\alpha$-dimensional (closed) 
subspace $W_x$ with
\(
x \in W_x \subset A \cup \{0\}.
\)

The next lemma generalizes a previous result established for the case of standard sequence classes (see \cite[Lemma 2.1]{Favaro-Pellegrino-Tomaz}).

\begin{lemma}\label{lemma-LI}
If $X$ is a standard sequence class and $\{x^{(1)},\dots,x^{(n)}\}$ is a linearly independent set in $X(E)$, then there exists $n_0 \in \mathbb{N}$ such that the set $\{(x^{(k)}_1,\dots, x^{(k)}_{n_0}): k=1, \ldots, n\}$ is linearly independent in $E^{n_0}$.
\end{lemma}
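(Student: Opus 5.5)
Consider the truncation maps $P_m : E^{\mathbb{N}} \to E^m$, $P_m(x) = (x_1,\dots,x_m)$, and the subspaces $K_m := \{(a_1,\dots,a_n)\in\mathbb{K}^n : \sum_{k=1}^n a_k x^{(k)}_j = 0 \text{ for } j=1,\dots,m\}$. The claim is exactly that $K_{n_0}=\{0\}$ for some $n_0$. The plan is a decreasing-chain argument in the finite-dimensional space $\mathbb{K}^n$, so that no topology of $X(E)$ is needed; this is the natural generalization of \cite[Lemma 2.1]{Favaro-Pellegrino-Tomaz}.

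First, observe that $K_1 \supseteq K_2 \supseteq \cdots$ is a decreasing sequence of linear subspaces of $\mathbb{K}^n$. Their dimensions form a non-increasing sequence of nonnegative integers, hence eventually constant. It follows that the chain stabilizes: there is $m_0$ with $K_m = K_{m_0}$ for all $m\ge m_0$.

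Next, I will show that $\bigcap_m K_m = \{0\}$. If $(a_1,\dots,a_n)$ lies in every $K_m$, then every coordinate of $\sum_k a_k x^{(k)}$ vanishes. Since $X(E)$ carries the coordinatewise algebraic operations (Definition \ref{standard-sc}), the vector $\sum_k a_k x^{(k)}$ is the zero sequence. Linear independence of $x^{(1)},\dots,x^{(n)}$ in $X(E)$ then forces all $a_k=0$.

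Combining the two steps, $K_{m_0} = \bigcap_m K_m = \{0\}$. So $n_0 := m_0$ works: a vanishing linear combination of the truncations $(x^{(k)}_1,\dots,x^{(k)}_{n_0})$ has coefficients in $K_{n_0}$, hence is trivial.

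There is no real obstacle. The only point to flag is that the argument is purely algebraic: it uses only that $X(E)$ is a linear subspace of $E^{\mathbb{N}}$ with coordinatewise operations, so the quasi-Banach (non-locally convex) setting causes no difficulty. An equivalent alternative is induction on $n$, choosing a coordinate where a suitable combination is nonzero, but the chain argument is cleaner.
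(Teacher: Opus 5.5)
Your proof is correct, but it takes a different route from the paper's. The paper argues by contradiction using compactness. Assuming no $n_0$ works, it picks for every $j$ a nonzero coefficient vector in $\mathbb{K}^n$, normalized in $\|\cdot\|_\infty$, whose combination of $x^{(1)},\dots,x^{(n)}$ vanishes on the first $j$ coordinates. It then extracts a subsequence of these vectors converging to some $a\neq 0$ on the unit sphere of $\mathbb{K}^n$. Finally, it uses convergence of the corresponding combinations in $X(E)$ and continuity of the projections $\pi_m$ (which comes from $X(E)\stackrel{C}{\hookrightarrow}\ell_\infty(E)$) to get $\sum_{i=1}^n a^{(i)}x^{(i)}=0$, contradicting independence.

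You replace compactness by a dimension count for the decreasing chain $K_1\supseteq K_2\supseteq\cdots$ in $\mathbb{K}^n$. Your argument is direct and purely algebraic, which buys you three things. First, it proves the statement for any finite linearly independent family in $E^{\mathbb{N}}$, so none of the axioms of Definition~\ref{standard-sc} and no (quasi-)norm is needed. Second, it avoids the normalization step, which the paper states loosely: the coefficients should be not all zero with $\|a_j\|_\infty=1$, not all of modulus one. Third, it needs no contradiction structure.

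The paper's version follows the classical scalar-valued template and emphasizes the continuity of the coordinate projections, which is used again in Claim~2 of the proof of Theorem~\ref{th-standardseq}. For this lemma, though, that topology is superfluous. Even inside the paper's argument, the coordinatewise limit $\sum_{i=1}^n a^{(i)}_{j_k}x^{(i)}_m\to\sum_{i=1}^n a^{(i)}x^{(i)}_m$ holds trivially in $E$ without invoking continuity of $\pi_m$.
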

\begin{proof}
Suppose, contrary to our claim, that for each $j \in \mathbb{N}$ there exist $a^{(1)}_j,\dots,a^{(n)}_j \in \mathbb{K}\setminus \{0\}$ such that
\begin{equation}\label{hip-cont}
	a^{(1)}_jx^{(1)}+\dots+a^{(n)}_jx^{(n)} = (0,\dots,0,z_{j+1},z_{j+2},\dots).
\end{equation}
Without loss of generality, we can take $|a_j^{(k)}| =1$ for all $j \in \mathbb{N}, k=1,\ldots ,n$ and define $a_j := (a^{(1)}_j,\dots,a^{(n)}_j) \in \mathbb{K}^n$. Since $(a_j)_{j=1}^\infty$ is bounded, there exists a subsequence $(a_{j_k})_{k=1}^\infty$  of $(a_j)_{j=1}^\infty$ such that
$$
(a^{(1)}_{j_k},\dots,a^{(n)}_{j_k}) = a_{j_k} \stackrel{k}{\longrightarrow} a = (a^{(1)},\dots,a^{(n)})
$$
and, as $\|a \|_\infty = \|a_{j_k}\|_\infty= 1$, we have $a\neq 0$. Furthermore, it follows that
\begin{equation}\label{conv-subseq}
a^{(1)}_{j_k}x^{(1)}+\dots+a^{(n)}_{j_k}x^{(n)} \stackrel{k}{\longrightarrow}  a^{(1)}x^{(1)}+\dots+a^{(n)}x^{(n)}.
\end{equation}
On the other hand, due to \eqref{hip-cont}, we have
\begin{equation}\label{pi-m-0}
\pi_m(a^{(1)}_{j_k}x^{(1)}+\dots+a^{(n)}_{j_k}x^{(n)} ) = a^{(1)}_{j_k}x^{(1)}_m+\dots+a^{(n)}_{j_k}x^{(n)}_m = 0 
\end{equation}
for all $m \le j_k$ and for sufficiently large values of $k$. Since the projections $\pi_m$ are continuous, from \eqref{conv-subseq} and \eqref{pi-m-0}, we obtain
\begin{align*}
\pi_m(a^{(1)}x^{(1)}+\dots+a^{(n)}x^{(n)}) &= \pi_m(\lim_{k} a^{(1)}_{j_k}x^{(1)}+\dots+a^{(n)}_{j_k}x^{(n)})\\
&= \lim_{k}\pi_m(a^{(1)}_{j_k}x^{(1)}+\dots+a^{(n)}_{j_k}x^{(n)} ) =0
\end{align*}
for all $m \in \mathbb{N}$. It follows that $a^{(1)}x^{(1)}+\dots+a^{(n)}x^{(n)} = 0$ and as $\{x^{(1)},\dots,x^{(n)}\}$ is linearly independent we have $a=0$, a contradiction.
\end{proof}

Next we state our main result concerning the geometric lineability approach on the sets $G(X,f,\{Y_\lambda\}_{\lambda \in \Lambda})$, which sharpens Theorems 3.3 and 3.5 in \cite{Pellegrino-Raposo}.

\begin{theorem} \label{th-standardseq}
Let $X$ be a standard sequence class, $\{Y_\lambda\}_{\lambda \in \Lambda}$ be an $F$-nested family of standard sequence classes and let $f:E \to F$ be a map that is linearly compatible with $Y_\lambda$ for all $\lambda \in \Lambda$. If the triple $(X,f,\{Y_\lambda\}_{\lambda \in \Lambda})$ is detachable, then
\[G(X,f,\{Y_\lambda\}_{\lambda \in \Lambda})
\ \text{is} \
(\alpha, \mathfrak{c})\text{-spaceable} 
\ \text{if and only if} \ \alpha < \aleph_0.
\]
\end{theorem}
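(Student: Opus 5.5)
We prove the two implications separately; write $G:=G(X,f,\{Y_\lambda\}_{\lambda\in\Lambda})$ and $Y:=\bigcup_{\lambda}Y_\lambda(F)$.

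\textbf{Necessity ($\alpha\ge\aleph_0$ fails).} We use the standard fact that a $\mathfrak{c}$-dimensional closed subspace of a (quasi-)Banach space, which is an $F$-space, cannot contain a dense subspace of countable dimension that is itself closed. So for $\alpha=\aleph_0$ we exhibit an $\aleph_0$-dimensional subspace $U\subset G\cup\{0\}$ that lies in no closed subspace contained in $G\cup\{0\}$.

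The construction is as follows.
\begin{itemize}
\item Take $x\in G$ and use detachability to split $\mathbb{N}=\mathbb{N}_1\cup\mathbb{N}_2$ with $\widehat f(x)|_{\mathbb{N}_2}\notin Y$.
\item Build infinitely many spread-out copies $x^{(k)}$ of $x$ on disjoint infinite sets, using spreading zeros and subsequence stability.
\item Choose scalars $t_k\to0$ fast, so that $\sum_k t_k x^{(k)}$ converges in $X(E)$ to a vector $z$ with $\widehat f(z)\in Y$. To arrange this, arrange the copies so that the partial sums share a common "good" part, or mix in $c_{00}$ vectors, i.e.\ take $U=\mathrm{span}\{x^{(k)}\}$ modified so that some limit of elements of $U$ lies outside $G\cup\{0\}$.
\end{itemize}
The closure of $U$ then meets $X(E)\setminus(G\cup\{0\})$. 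This is the delicate step. The cleanest way is to take $U$ spanned by $x^{(1)}$ and the vectors $x^{(1)}+ 2^{-k}e_k\cdot v$ minus corrections, arranged so that $U$ is dense in the span of $x^{(1)}$ plus a $c_{00}$ piece. Since $c_{00}(E)\subset X(E)$ and $f$ is linearly compatible, $c_{00}$ vectors are not in $G$ (because $f(0)=0$). Hence any closed superspace of $U$ contains a nonzero $c_{00}$ vector, which is not in $G$. For $\alpha>\aleph_0$ the same counterexample embeds in any larger $\alpha$-dimensional subspace, and $\alpha$-lineability must also be checked. I expect this direction to be the main obstacle.

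\textbf{Sufficiency ($\alpha<\aleph_0$).} Let $U_n=\mathrm{span}\{x^{(1)},\dots,x^{(n)}\}\subset G\cup\{0\}$.

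\begin{enumerate}
\item By Lemma~\ref{lemma-LI}, fix $n_0$ such that the truncations to the first $n_0$ coordinates are linearly independent.
\item Apply Proposition~\ref{lema-restricoes-in}, with $\widetilde\lambda$ given by detachability, to the vectors $x^{(k)}$, adjusted after discarding the first $n_0$ coordinates. This yields a partition $\mathbb{N}'\cup\mathbb{N}''$ (with $\{1,\dots,n_0\}\subset\mathbb{N}'$) such that $\widehat f(x^{(k)})|_{\mathbb{N}'}\in Y_{\widetilde\lambda}(F)$ and $\widehat f(x^{(k)})|_{\mathbb{N}''}\notin Y$. By linear compatibility the first property passes to all of $U_n$. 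The second property for nonzero elements of $U_n$ follows from $\widehat f(u)\notin Y$, the first property, and Proposition~\ref{lema-restricoes}.
\item Next, choose a nonzero $u_0\in G$ and split $\mathbb{N}'\setminus\{1,\dots,n_0\}$ into infinitely many disjoint infinite sets $A_i$. Spreading $u_0$ onto each $A_i$ gives $w^{(i)}$.
\item Run the Pellegrino--Raposo construction of a closed $\mathfrak{c}$-dimensional space $W=\{\sum_i a_iw^{(i)}:(a_i)\in\ell_r\}$ inside $G\cup\{0\}$, where $r$ is small enough to handle quasi-norms via the $r$-triangle inequality.
\item Set $U_{\mathfrak{c}}=U_n+W$. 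This is closed, being a finite-dimensional extension of a closed space, and it is $\mathfrak{c}$-dimensional.
\item Finally, check $U_n+W\subset G\cup\{0\}$. For $u+w$ with $w\ne0$, some $a_i\ne0$. Restricting to $A_i$, the contribution of $u$ there lies in $Y_{\widetilde\lambda}$ after subsequence stability, while $a_iw^{(i)}$ restricted to $A_i$ is a spread copy of $a_iu_0\notin Y$. Linear compatibility then gives $\widehat f(u+w)|_{A_i}\notin Y$, and hence $\widehat f(u+w)\notin Y$ by subsequence stability. For $w=0$ and $u\neq0$ we already have $u\in G$.
\end{enumerate}
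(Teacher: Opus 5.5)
\textbf{The necessity direction has a genuine gap.} You never exhibit an $\aleph_0$-dimensional subspace $U\subset G\cup\{0\}$ with $\overline{U}\not\subset G\cup\{0\}$, and the constructions you sketch do not work.
\begin{itemize}
\item If the $x^{(k)}$ are disjointly supported spread copies of some $x\in G$, then any limit $z=\sum_k t_kx^{(k)}$ with some $t_k\neq 0$, restricted to the $k$-th block, equals $t_kx$. By compatibility and subsequence stability $z\in G$, so no choice of $t_k$ helps.
\item $\operatorname{span}\{x^{(1)},\,x^{(1)}+2^{-k}v\cdot e_k\}$ contains the nonzero vectors $2^{-k}v\cdot e_k\in c_{00}(E)$. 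These are not in $G$, since $\widehat f(c_{00}(E))\subset c_{00}(F)\subset Y_\lambda(F)$. The ``corrections'' that would repair this are never specified.
\item Your opening ``standard fact'' is beside the point: all that is needed is that every closed superspace of $U$ contains $\overline U$.
\end{itemize}
The missing idea is to reverse the roles: the $c_{00}$ vector should be the common limit, and the vanishing perturbations should be elements of $G$. Fix $0\neq v\in E$ and a partition $\mathbb{N}=\bigcup_k\mathbb{N}_k$. Set $x^{(k)}:=v\cdot e_1+s^{(k)}$, where $s^{(k)}$ is a spread copy of an element of $G$ placed on $\mathbb{N}_k\setminus\{1\}$, rescaled (compatibility allows this) so that $\|s^{(k)}\|_{X(E)}=2^{-k}$. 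Take $\bar x=\sum_{i=1}^m b_ix^{(k_i)}$ with distinct $k_i$ and $b_1\neq 0$. On $\mathbb{N}_{k_1}\setminus\{1\}$, $\widehat f(\bar x)$ coincides with $\widehat f(b_1s^{(k_1)})$, so it is not in $\bigcup_\lambda Y_\lambda(F)$. Hence $\operatorname{span}\{x^{(k)}\}\setminus\{0\}\subset G$, while $x^{(k)}\to v\cdot e_1\notin G\cup\{0\}$. This is the paper's argument, and detachability plays no role in it. For $\alpha>\aleph_0$ you would also need an $\alpha$-dimensional subspace of $G\cup\{0\}$ containing $U$, which you only assert; the paper itself treats only $\alpha=\aleph_0$.

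\textbf{The sufficiency argument is sound in substance and takes a different route from the paper.} The paper spreads $x|_{\mathbb{N}\setminus\mathbb{O}}$ onto blocks of the good set $\mathbb{O}$ and builds $x,y,z$ into $T:\ell_p\to X(E)$. It then needs Lemma~\ref{lemma-LI} for injectivity, the bad set $\mathbb{N}\setminus\mathbb{O}$ for elements with a nonzero $U_n$-component, and an explicit coefficient-convergence computation for the closure. You instead spread an arbitrary $u_0\in G$ onto blocks $A_i$ of the good set. You get closedness of $U_n+\overline W$ for free, since a closed subspace plus a finite-dimensional one is closed in any Hausdorff topological vector space. Membership is checked on the good set alone: if the $W$-component is nonzero, restrict to a block $A_i$ where it equals $a_iu_0$ with $a_i\neq 0$; otherwise use the hypothesis on $U_n$. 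This makes Lemma~\ref{lemma-LI} and the property on $\mathbb{N}''$ superfluous.

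Two repairs are needed.
\begin{itemize}
\item $W=\{\sum_ia_iw^{(i)}:(a_i)\in\ell_r\}$ is in general not closed. For example, when $X=c_0(\cdot)$, $\sum_i i^{-1/r}w^{(i)}\in\overline W\setminus W$. So you must work with $\overline W$ and note that each $w\in\overline W$ vanishes off $\bigcup_iA_i$ and satisfies $w|_{A_i}=a_iu_0$ for scalars $a_i$. This follows from continuity of the coordinate projections and of restriction to $A_i$, which maps $W$ into the closed line $\mathbb{K}u_0$. With that, your step~6 goes through verbatim.
\item The subtraction in step~6 uses nestedness. If $\widehat f((u+w)|_{A_i})\in Y_\mu(F)$, first place it and $\widehat f(u|_{A_i})\in Y_{\widetilde\lambda}(F)$ in the larger of $Y_\mu(F)$ and $Y_{\widetilde\lambda}(F)$, and only then apply linear compatibility.
\end{itemize}
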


\begin{proof}
We begin by proving that the case $\alpha = \aleph_0$ is impossible. Let $\mathbb{N} = \bigcup_{j=1}^\infty \mathbb{N}_j$ be a partition, and fix $x \in E$. For each $k \in \mathbb{N}$, define a sequence $x^{(k)} := \left(x,x_j^{(k)}, x_{j+1}^{(k)}, \ldots\right)_{j \geq 2}$ such that 
\[
x_j^{(k)} = 0\ \text{if}\  j \notin \mathbb{N}_k \ \text{and}\ (x_j^{(k)})_{j \in \mathbb{N}_k (j\geq2)} \in G(X,f,\{Y_\lambda\}_{\lambda \in \Lambda}).
\]
We can assume that 
$$
\|(x_j^{(k)})_{j \in \mathbb{N}_k (j\geq2)}\|_{X(E)}= 2^{-k} \ \text{for all}\ k \in \mathbb{N}
$$
and define
$$
\Omega := \operatorname{span}\{x^{(k)}: k \in \mathbb{N}\}.
$$
We now show that $\Omega\setminus\{0\} \subset G\left(X,f,\{Y_\lambda\}_{\lambda \in \Lambda}\right)$. Let $\bar{x} = \sum_{i=1}^n b_i x^{(k_i)} \in \Omega \setminus \{0\}$ with all $b_i \neq 0$. Suppose, for contradiction, that $(f(\bar{x}_j))_{j \in \mathbb{N}} \in Y_{\lambda'}(F)$ for some $\lambda' \in \Lambda$. Then, restricting to the indices in $\mathbb{N}_{k_1} \setminus \{1\}$, we have
\[
(f(b_1 x_j^{(k_1)}))_{j \in \mathbb{N}_{k_1},\, j\geq 2} = (f(\bar{x}_j))_{j \in \mathbb{N}_{k_1},\, j\geq 2} \in Y_{\lambda'}(F).
\]
Since $f$ is compatible with $Y_{\lambda'}$, it follows that $(f(x_j^{(k_1)}))_{j \in \mathbb{N}_{k_1},\, j\geq 2} \in Y_{\lambda'}(F) \subseteq \bigcup_{\lambda \in \Lambda} Y_\lambda(F)$. This contradicts the fact that $(x_j^{(k_1)})_{j \in \mathbb{N}_{k_1},\, j\geq 2} \in G(X, f, \{Y_{\lambda}\}_{\lambda \in \Lambda})$. Hence, $\Omega \setminus \{0\} \subset G(X, f, \{Y_{\lambda}\}_{\lambda \in \Lambda})$.

Next, we prove that no closed subspace $\Theta$ of $X(E)$ containing $\Omega$ can satisfy $\Theta \setminus \{0\} \subset G(X, f, \{Y_{\lambda}\}_{\lambda \in \Lambda})$. Suppose, for contradiction, that such a $\Theta$ exists. Then,
\begin{align*}
\lim_{k} \| x^{(k)} - x \cdot e_1 \|_{X(E)}
= \lim_{k} \left\| (0, x_j^{(k)}, x_{j+1}^{(k)}, \ldots) \right\|_{X(E)}
= \lim_{k} 2^{-k} = 0.
\end{align*}
Therefore, $x \cdot e_1 \in \overline{\Omega} \subset \overline{\Theta} = \Theta \subset G(X, f, \{Y_{\lambda}\}_{\lambda \in \Lambda}) \cup \{0\}$. However, this implies $f(x) \cdot e_1 \notin Y_{\lambda}(F)$ for all $\lambda \in \Lambda$, which is impossible because $c_{00}(F) \subset Y_{\lambda}(F)$ for every $\lambda$. This concludes the proof for $\alpha = \aleph_0$.

\vskip 3mm

We now prove the result for $\alpha = 3$. The argument, with obvious modifications, applies to any finite $\alpha = n \in \mathbb{N}$. Let $\{x, y, z\}$ be a linearly independent subset of $X(E)$ such that
\begin{equation}\label{W-notin}
W := \operatorname{span}\{x, y, z\} \subset G(X, f, \{Y_{\lambda}\}_{\lambda \in \Lambda}) \cup \{0\}.
\end{equation}
By Proposition~\ref{lema-restricoes}, there exists an infinite set $\mathbb{N}_1 := \{\alpha_1 < \alpha_2 < \cdots \} \subset \mathbb{N}$, with infinite complement, such that
\begin{equation}\label{xyz-n1-G}
	x|_{\mathbb{N} \setminus \mathbb{N}_1},\ y|_{\mathbb{N} \setminus \mathbb{N}_1},\ z|_{\mathbb{N} \setminus \mathbb{N}_1} \in G(X, f, \{Y_{\lambda}\}_{\lambda \in \Lambda}).
\end{equation}
Rewriting these restrictions yields:
\[
\begin{array}{l}
\left( \sum_{j \notin \mathbb{N}_1}x_j \cdot e_j \right)|_{\mathbb{N}\setminus\mathbb{N}_1}
= x|_{\mathbb{N}\setminus\mathbb{N}_1}\in  G(X,f,\{Y_\lambda\}_{\lambda \in \Lambda}), \\ %\vskip1mm
\left( \sum_{j \notin \mathbb{N}_1}y_j \cdot e_j\right)|_{\mathbb{N}\setminus\mathbb{N}_1}
= y|_{\mathbb{N}\setminus\mathbb{N}_1}\in  G(X,f,\{Y_\lambda\}_{\lambda \in \Lambda}), \ \text{and} \\
\left( \sum_{j \notin \mathbb{N}_1}z_j \cdot e_j \right)|_{\mathbb{N}\setminus\mathbb{N}_1}
= z|_{\mathbb{N}\setminus\mathbb{N}_1} \in  G(X,f,\{Y_\lambda\}_{\lambda \in \Lambda}).
\end{array}
\]
Since the sets $G(X, f, \{Y_{\lambda}\}_{\lambda \in \Lambda})$ have the \emph{spreading zeros} property (Definition~\ref{standard-sc}-(iii)), it follows that
\[
\sum_{j \notin \mathbb{N}_1}x_j \cdot e_j,\, \sum_{j \notin \mathbb{N}_1}y_j \cdot e_j,\, \sum_{j \notin \mathbb{N}_1}z_j \cdot e_j \in  G(X,f,\{Y_\lambda\}_{\lambda \in \Lambda}).
\]
each $(X, f, Y_\lambda)$ is detachable for all $\lambda \in \Lambda$, Proposition~\ref{lema-restricoes-in} allows us to assume, without loss of generality, that
\[
(f(x_j))_{j \in \mathbb{N}_1},\ (f(y_j))_{j \in \mathbb{N}_1},\ (f(z_j))_{j \in \mathbb{N}_1} \in Y_{\lambda'}(F)
\]
for some $\lambda' \in \Lambda$. From Lemma \ref{lemma-LI}, there exists $n_0 \in \mathbb{N}$ such that the vectors formed by the first $n_0$-coordinates of $x,\, y$ and $z$ are linearly independent in $E^{n_0}$. Set $\mathbb{O}:= \mathbb{N}_1\setminus\{1,\dots, n_0\}$, so, $\mathbb{N} \setminus \mathbb{O} = \left( \mathbb{N}\setminus \mathbb{N}_1\right)\cup \{1,\dots,n_0\}$. Then
\begin{equation}\label{eqO}
\sum_{j \notin \mathbb{O}}x_j \cdot e_j,\, \sum_{j \notin \mathbb{O}}y_j \cdot e_j,\, \sum_{j \notin \mathbb{O}}z_j \cdot e_j \in  G(X,f,\{Y_\lambda\}_{\lambda \in \Lambda})
\end{equation}
and
\begin{equation} \label{Ylamba'}
(f(x_j))_{j\in\mathbb{O}},\ (f(y_j))_{j\in\mathbb{O}},\ (f(z_j))_{j\in\mathbb{O}}
\in Y_{\lambda'}(F)\subseteq\bigcup_{\lambda\in\Lambda}Y_\lambda(F).
\end{equation}
Take an infinite partition
\[
\mathbb{O}:=\bigcup_{i=1}^\infty\mathbb{O}_i,
\qquad
\mathbb{O}_i:=\{n^{(i)}_1<n^{(i)}_2<\cdots\}.
\] 
Since $\mathbb{N}\setminus \mathbb{O}$ is infinite, choose an injective map $h: \mathbb{N}\to \mathbb{N}$ with $\mathbb{N}\setminus \mathbb{O} = h(\mathbb{N})$. As $x$ lies in $X(E)$, define for each $i \in \mathbb{N}$
$$
\varepsilon^{(i)} := \sum_{j=1}^{\infty}x_{h(j)} \cdot e_{n^{(i)}_j} \in X(E).
$$
Moreover
$$
\|\varepsilon^{(i)}\|_{X(E)} = \|(x_{h(j)})_{j \in \mathbb{N}} \|_{X(E)} \leq \|x\|_{X(E)}.
$$

Since $X(E)$ is quasi-Banach, it is a $p$-Banach space for some $p \in (0,1]$. For any arbitrary sequence $(a_i)^\infty_{i=1} \in \ell_p$,
$$
\sum_{i=1}^\infty\|a_i \varepsilon^{(i)}\|_{X(E)}^p
= \sum_{i=1}^\infty|a_i|^p\| \varepsilon^{(i)}\|_{X(E)}^p
\leq \| x\|_{X(E)}^p \sum_{i=1}^\infty|a_i|^p < \infty,
$$
hence the series \(\sum_{i=3}^\infty a_i\varepsilon^{(i)}\) converges in \(X(E)\). Define $T: \ell_p \to X(E)$ by
$$
T((a_i)^\infty_{i=0})
:= a_0x +a_1y+ a_2z + \sum_{i=3}^\infty a_i \varepsilon^{(i)}.
$$
The operator $T$ is bounded and injective. We prove the injectivity property: let $a=(a_i)^\infty_{i=0} \in \ell_p$ satisfying
$$
a_0x +a_1 y +a_2 z + \sum_{i=3}^\infty a_i \varepsilon^{(i)} = T(a) = 0.
$$
With $A:=\{1, \dots, n_0\} \subset \mathbb{N}\setminus \mathbb{O}$, the first $n_0$ coordinates of every $\varepsilon^{(i)}$ vanish, hence
$$
T(a)|_A = a_0 (x_j)_{j=1}^{n_0} + a_1 (y_j)_{j=1}^{n_0}+ a_2 (z_j)_{j=1}^{n_0} = 0.
$$
By linear independence of these vectors we obtain \(a_0=a_1=a_2=0\) and so $\sum_{i=3}^\infty a_i\varepsilon^{(i)} = 0$. Since the supports of the \(\varepsilon^{(i)}\) are pairwise disjoint, it follows that \(a_i=0\) for every \(i\ge 3\). Thus \(T\) is injective.

\vskip 3mm

\noindent\textbf{Claim 1.} $T(\ell_p)\setminus \{0\} \subseteq G(X,f,\{Y_\lambda\}_{\lambda \in \Lambda})$.

\vskip 2mm

Fix $a=(a_i)_{i=0}^\infty \in \ell_p$. The argument proceeds by considering two separate cases. First we suppose that $a_0=a_1=a_2=0.$ Since $\varepsilon^{(i)}$ have pairwise disjoint supports, we have
$$
\widehat{f}(T(a)) = \sum_{i=3}^\infty \widehat{f}(a_i \varepsilon^{(i)}). 
$$
On the other hand, as $\varepsilon^{(i)}|_{\mathbb{N}\setminus \mathbb{O}_i} = 0$, we have
$$
\widehat{f}(T(a))|_{\mathbb{O}_{k}}
= \sum_{i=3}^\infty \widehat{f}(a_i \varepsilon^{(i)})|_{\mathbb{O}_{k}}
= \widehat{f}(a_{k} \varepsilon^{(k)})|_{\mathbb{O}_{k}}
= \widehat{f}(a_{k} \varepsilon^{(k)}|_{\mathbb{O}_{k}})
= (f(a_{k}x_{h(j)}))_{j \in \mathbb{N}}, \ k \in \mathbb{N}.
$$
Suppose that $\widehat{f}(T(a)) \in Y_{\tilde{\lambda}}(F)$ for some $\tilde{\lambda} \in \Lambda$. Then
$$
(f(a_{k}x_{h(j)}))_{j \in \mathbb{N}} \in Y_{\tilde{\lambda}}(F)
$$
and, since $f$ is linearly compatible with $Y_{\tilde{\lambda}}$, we have $(f(x_{h(j)}))_{j \in \mathbb{N}} \in Y_{\tilde{\lambda}}(F)$, which yields
\[
\sum_{j \notin \mathbb{O}}f(x_j) \cdot e_j \in Y_{\tilde{\lambda}}(F) \subseteq \bigcup_{\lambda \in \Lambda}Y_\lambda(F)
\]
contradicting \eqref{eqO}. So $T(a) \in G(X,f,\{Y_\lambda\}_{\lambda \in \Lambda})$.\\

\vskip 2mm

We now treat the case in which $a_i \neq 0$ for some $i \in \{0,1,2\}$. Since $\varepsilon^{(i)}|_{\mathbb{N}\setminus \mathbb{O}} = 0$ for all $i \in \mathbb{N}$, it follows that
$$
T(a)|_{\mathbb{N} \setminus \mathbb{O}} =\left. \left( a_0x +a_1y+ a_2z + \sum_{i=3}^\infty a_i \varepsilon^{(i)}\right) \right|_{\mathbb{N} \setminus \mathbb{O}} = (a_0x +a_1y+ a_2z)|_{\mathbb{N} \setminus \mathbb{O}}.
$$
Consequently
$$
\widehat{f}(T(a)|_{\mathbb{N} \setminus \mathbb{O}}) =(f(a_0x_j +a_1y_j+ a_2z_j))_{j\in \mathbb{N}\setminus\mathbb{O}}.
$$
By \eqref{W-notin} and the Proposition \ref{lema-restricoes}-(i), we obtain
\begin{equation} \label{fa012inornot}
(f(a_0x_j +a_1y_j+ a_2z_j))_{j\in \mathbb{N}\setminus\mathbb{O}} \notin \bigcup_{\lambda \in \Lambda}Y_\lambda(F)
\ \textrm{or} \
(f(a_0x_j +a_1y_j+ a_2z_j))_{j\in \mathbb{O}} \notin \bigcup_{\lambda \in \Lambda}Y_\lambda(F).
\end{equation}
However, from \eqref{Ylamba'} and the linear compatibility of $f$ with $Y_{\lambda'}$, it follows that $(f(a_0x_j))_{j \in \mathbb{O}}$,  $(f(a_1y_j))_{j \in \mathbb{O}}$, $(f(a_2z_j))_{j \in \mathbb{O}} \in Y_{\lambda'}(F)$, and this yields
\[
(f(a_0x_j +a_1y_j+ a_2z_j))_{j \in \mathbb{O}} \in \bigcup_{\lambda \in \Lambda}Y_\lambda(F).
\]
Hence, the first alternative of \eqref{fa012inornot} is valid, yielding $\widehat{f}(T(a)) \notin \bigcup_{\lambda \in \Lambda}Y_\lambda(F)$. This establishes the claim.

\vskip 3mm

The first claim established the $(3, \mathfrak{c})$-lineability of $G(X,f,\{Y_\lambda\}_{\lambda \in \Lambda})$. We now prove the spaceability assertion.

\vskip 3mm

\noindent\textbf{Claim 2.} $\overline{T(\ell_p)}\setminus\{0\} \subseteq G(X,f,\{Y_\lambda\}_{\lambda \in \Lambda})$.

\vskip 2mm

Let $w = (w_j)_{j=1}^\infty \in \overline{T(\ell_p)}\setminus\{0\}$, and sequences $b^{(k)} =(b_i^{(k)})_{i=0}^\infty \in \ell_p$, $k \in \mathbb{N}$ such that $w = \lim_k T(b^{(k)})$ in $X(E)$. Since
$$
T(b^{(k)})|_{\mathbb{N} \setminus \mathbb{O}} = (b_0^{(k)}x_j + b_1^{(k)}y_j + b_2^{(k)}z_j)_{j \in \mathbb{N} \setminus \mathbb{O}},
$$
the limit
\begin{equation*}
\lim_k (b_0^{(k)}x_j + b_1^{(k)}y_j + b_2^{(k)}z_j)_{j \in \mathbb{N} \setminus \mathbb{O}}
\end{equation*}
exists in $X(E)$. As before, since $\{1,\dots, n_0\} \subset \mathbb{N} \setminus \mathbb{O}$, the set
$$
\{\widetilde{x} =(x_j)_{j \in \mathbb{N}\setminus\mathbb{O}}, \widetilde{y}=(y_j)_{j \in \mathbb{N}\setminus\mathbb{O}}, \widetilde{z}= (z_j)_{j \in \mathbb{N}\setminus\mathbb{O}}\}
$$
is a linearly independent set in $X(E)$. Because
$
\widetilde{W}:=\operatorname{span} \{\widetilde{x},\widetilde{y},\widetilde{z}\}
$
is closed, there exist $b_0, b_1,b_2 \in \mathbb{K}$ such that
\begin{align*}
\lim_k (b_0^{(k)}x_j + b_1^{(k)}y_j + b_2^{(k)}z_j)_{j \in \mathbb{N} \setminus \mathbb{O}}
&= (b_0x_j + b_1y_j + b_2z_j)_{j \in \mathbb{N} \setminus \mathbb{O}}\\
&= b_0 \widetilde{x} + b_1 \widetilde{y} + b_2 \widetilde{z}.
\end{align*}
Consider the continuous linear functional $\varphi_{\widetilde{x}}: \widetilde{W} \to \mathbb{K}$ defined by $\varphi_{\widetilde{x}} (\alpha \widetilde{x} + \beta \widetilde{y} + \gamma \widetilde{z}) = \alpha$. Then
\begin{align*}
b_0
&= \varphi_{\widetilde{x}} (b_0 \widetilde{x} + b_1 \widetilde{y} + b_2 \widetilde{z})\\
&= \varphi_{\widetilde{x}} \left[ \lim_k (b_0^{(k)} \widetilde{x} + b_1^{(k)} \widetilde{y} + b_2^{(k)} \widetilde{z})\right] \\
&= \lim_k \varphi_{\widetilde{x}}(b_0^{(k)} \widetilde{x} + b_1^{(k)} \widetilde{y} + b_2^{(k)} \widetilde{z})
=\lim_k b_0^{(k)}
\end{align*}
Similarly, $\lim_k b_1^{(k)} = b_1$ and $\lim_k b_2^{(k)} = b_2$. Hence, we can write
$$
w_j = b_0 x_j + b_1 y_j +b_2 z_j,\ \ j \in \mathbb{N}\setminus\mathbb{O}.
$$

Suppose first that $b_i = \lim_k b_i^{(k)} = 0$ for all $i \in \{0,1,2\}$. Then $w_j = 0$ for all $j \in \mathbb{N}\setminus\mathbb{O}$. Since $w\neq 0$, there is $r \in \mathbb{O}$ with $w_r \neq 0$. As $\mathbb{O} = \bigcup_{i=1}^\infty\mathbb{O}_i$, there exist $m, t \in \mathbb{N}$ such that $e_r = e_{n_t^{(m)}}$. Note that
\[
T(a^{(k)})
= b_0^{(k)}x +b_1^{(k)}y+ b_2^{(k)}z +
\sum_{i=3}^\infty \sum_{j=1}^\infty b_i^{(k)} x_{h(j)} \cdot e_{n_j^{(i)}},
\ k \in \mathbb{N},
\]
and the $r$-th coordinate of $T(a^{(k)})$ is
$$
b_0^{(k)}x_r +b_1^{(k)}y_r+ b_2^{(k)}z_r + b_m^{(k)}x_{h(t)}.
$$
Since $X(E) \stackrel{C}{\hookrightarrow} \ell_\infty(E)$, we have 
$$
0 \neq w_r = \lim_k\left(b_0^{(k)}x_r +b_1^{(k)}y_r+ b_2^{(k)}z_r + b_m^{(k)}x_{h(t)} \right)=\left( \lim_k b_m^{(k)}\right) x_{h(t)}
$$
and
\begin{align*}
w|_{\mathbb{O}_m}
&= \left.\left[ \lim_k\left( b_0^{(k)}x +b_1^{(k)}y+ b_2^{(k)}z + \sum_{i=3}^\infty b_i^{(k)}\varepsilon^{(i)} \right)  \right]\right |_{\mathbb{O}_m} 
= \left.\left[ \lim_k\left( \sum_{i=3}^\infty b_i^{(k)}\varepsilon^{(i)} \right)  \right]\right|_{\mathbb{O}_m}\\
&= \lim_k \left. \left( \sum_{i=3}^\infty b_i^{(k)}\varepsilon^{(i)} \right)\right |_{\mathbb{O}_m}
= \lim_k  \left( \sum_{i=3}^\infty b_i^{(k)}\varepsilon^{(i)}|_{\mathbb{O}_m} \right)
= \lim_k \left( b_m^{(k)} \varepsilon^{(m)}|_{\mathbb{O}_m}\right)\\
&=\left(\lim_k  b_m^{(k)} \right)\cdot(x_j)_{j \in \mathbb{N}\setminus \mathbb{O}}.
\end{align*}

If $\widehat{f}(w) \in Y_{\widehat{\lambda}}(E)$ for some $\widehat{\lambda} \in \Lambda$, by definition of standard sequence classes,
\begin{align*}
\left( f\left( \left(\lim_k  b_m^{(k)} \right)\cdot x_j\right)\right)_{j \in \mathbb{N}\setminus \mathbb{O}}& = \widehat{f}\left( \left(\lim_k  b_m^{(k)} \right)\cdot(x_j)_{j \in \mathbb{N}\setminus \mathbb{O}}\right)\\
&=\widehat{f}(w|_{\mathbb{O}_m})\\
&=\widehat{f}(w)|_{\mathbb{O}_m} \in  Y_{\widehat{\lambda}}(F).
\end{align*}
Since $f$ is linearly compatible, it follows that
$$
(f(x_j))_{j \in \mathbb{N}\setminus \mathbb{O}} \in Y_{\widehat{\lambda}}(F) \subseteq \bigcup_{\lambda \in \Lambda}Y_\lambda(F),
$$ 
contradicting \eqref{xyz-n1-G}.

Now suppose that $b_i = \lim_k b_i^{(k)} \neq 0$ for some $i \in \{0,1,2\}$. If $\widehat{f}(w) \in Y_{\widehat{\lambda}}(F)$ for some $\widehat{\lambda} \in \Lambda$, then
$$
\widehat{f} \left((b_0x + b_1y + b_2z)|_{\mathbb{N}\setminus \mathbb{O}} \right)
%= (f(b_0x_j + b_1y_j + b_2z_j))_{j \in \mathbb{N}\setminus\mathbb{O}}
= \widehat{f}(w|_{\mathbb{N}\setminus \mathbb{O}})
= \widehat{f}(w)|_{\mathbb{N}\setminus \mathbb{O}} \in Y_{\widehat{\lambda}}(F) \subseteq \bigcup_{\lambda \in \Lambda}Y_\lambda(F)
$$
which is impossible by the argument from Claim 1 (see \eqref{fa012inornot}).

In both cases $\widehat{f}(w) \notin \bigcup_{\lambda \in \Lambda}Y_\lambda(F)$ and so $
\overline{T(\ell_p)}\setminus\{0\} \subseteq G(X,f,\{Y_\lambda\}_{\lambda \in \Lambda})$. This proves the claim and completes the proof.
\end{proof}

\section{Pointwise spaceability in operator ideals}\label{Sec-PL-OI}

In this section, we establish a pointwise spaceability result in the context of operator ideals. We begin by recalling the notion of quasi-Banach operator ideals. An \emph{operator ideal} $\mathcal{I}$ is a subclass of $\mathcal{L}$, the class of all continuous linear operators between Banach spaces, such that for all Banach spaces $E$ and $F$, the components $\mathcal{I}(E,F):=\mathcal{L}(E,F)\cap\mathcal{I}$ satisfy: 
%\begin{description}
(i) $\mathcal{I}(E,F)$ is a vector subspace of $\mathcal{L}(E,F)$ and contains all finite-rank operators $\mathcal{F}(E,F)$; 
(ii) (\emph{Ideal property}) If $v\in\mathcal{L}(F_0,F)$, $T\in\mathcal{I}(E_0,F_0)$, and $u\in\mathcal{L}(E,E_0)$, then $v\circ T\circ u\in\mathcal{I}(E,F)$.
%\end{description}
We say that $\mathcal{I}$ is a \emph{quasi-normed operator ideal} if there exists a map $\alpha:\mathcal{I}\to[0,\infty)$ such that the restriction of $\alpha$ to each component $\mathcal{I}(E,F)$ is a quasi-norm for all Banach spaces $E$ and $F$; $\alpha(\mathrm{id}_{\mathbb{K}})=1$; and for all $T_1\in\mathcal{L}(F_0,F)$, $T_2\in\mathcal{I}(E_0,F_0)$, and $T_3\in\mathcal{L}(E,E_0)$,
\(
\alpha(T_1\circ T_2\circ T_3)\le\|T_1\|\cdot\alpha(T_2)\cdot\|T_3\|.
\)
Let $(\mathcal{I},\alpha)$ be a quasi-normed operator ideal. If $\mathcal{I}$ is complete with respect to $\alpha$, we call it a \emph{quasi-Banach operator ideal}.

Let $\mathcal{S}$ be a standard sequence space. The next result is a particular case of \cite[Proposition 2.2]{Albuquerque-Coleta} for operator ideals. For simplicity of notation, given a map $T:W \to Z$ we denote by $Tw$ the evaluation of $T$ at $w$; for a sequence $x\in E^{\mathbb{N}}$, we write $x(j) \in E$ for its $j$-th coordinate.

\begin{proposition}\label{prop_pw-mothervector}
Let $\mathcal{I}$ be a (quasi-)Banach operator ideal. Given a partition $\left( \mathbb{N}_k \right)_{k \in \mathbb{N}}$ of $\mathbb{N}$ and a non-trivial operator $T \in \mathcal{I} \left(E;\mathcal{S}\right)$, we have the following.
\begin{enumerate}[(i)]
\item For each natural $k \geq 1$, the map $T_k : E \to  \mathcal{S}$ defined by
\[
T_k x = \sum_{i \in \mathbb{N}} Tx(i) \cdot e_{n_i^{(k)}}.
\]
%\begin{equation*}
%T_k x \left( j \right) := 
%\begin{cases}
%0, & \mbox{ if } j \notin \mathbb{N}_k,\\
%Tx (i), & \mbox{ if } j = n_i^{(k)} \in \mathbb{N}_k,
%\end{cases} 
%\end{equation*}
lies in $\mathcal{I} \left(E;\mathcal{S}\right)$, and $\left\| T_{k} x \right\|_{\mathcal{S}} =\left\| Tx \right\|_{\mathcal{S}}$ for all $x\in E$. Moreover, $\left\{T_k : k \geq 2 \right\} \cup \{T\}$ is linearly independent.

\item For some $0<p\leq1$, the map $\Psi :\ell_p \to  \mathcal{I} \left(E;\mathcal{S}\right)$ given by
\begin{equation*} %\label{psi_op}
\Psi_a := \Psi a = a_1 T + \sum_{j=2}^{\infty} a_j T_j,  \quad  a=(a_j)_{j\in \mathbb{N}} \in \ell_p,
\end{equation*}
is well-defined, linear, bounded, and injective.
%Moreover, $\Psi\left( \ell_p \right) \supset \left\{T_k : k \in \mathbb{N}\right\} \cup \{T\}$.
\end{enumerate}
\end{proposition}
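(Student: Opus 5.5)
The plan is to realise each $T_k$ as a fixed operator composed with $T$, so that membership in $\mathcal{I}$ follows from the ideal property, and then to build $\Psi$ from the $p$-norm of the quasi-Banach ideal. Throughout, write $\mathbb{N}_k=\{n^{(k)}_1<n^{(k)}_2<\cdots\}$ and $\mathcal{S}=X(F)$ for a standard sequence class $X$.

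For (i), consider the spreading map $J_k:\mathcal{S}\to\mathcal{S}$ given by $J_k y=\sum_i y(i)\cdot e_{n^{(k)}_i}$. By Definition \ref{standard-sc}(iii), $J_k$ maps $\mathcal{S}$ into $\mathcal{S}$ with $\|J_k y\|_{\mathcal S}\le \|y\|_{\mathcal S}$. Since $y$ is a subsequence of $J_k y$, condition (ii) gives the reverse inequality, so $J_k$ is a linear isometry; in particular it is bounded. Then $T_k=J_k\circ T$ is in $\mathcal{I}(E;\mathcal{S})$ by the ideal property, and $\|T_kx\|=\|Tx\|$.

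For linear independence, suppose $a_0T+\sum_{k=2}^m a_kT_k=0$. Pick $x\in E$ with $Tx\neq0$, and an index $i$ with $Tx(i)\neq 0$. The $T_k x$ for $k\ge2$ have pairwise disjoint supports $\mathbb{N}_k$, and $T_kx\neq0$. Evaluating the relation at $x$ and restricting to $\mathbb{N}_k$ for a fixed $k\ge2$, only $a_0 Tx|_{\mathbb{N}_k}+a_kT_kx|_{\mathbb{N}_k}=0$ survives. This is the main obstacle, because $T$ itself is not supported away from the $\mathbb{N}_k$, so $a_0$ cannot simply be read off from disjointness. I would first show $a_0=0$ using finiteness of the sum. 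Restricting to $\mathbb{N}_1\cup\bigcup_{k>m}\mathbb{N}_k$, which is disjoint from the supports of $T_2x,\dots,T_mx$, gives $a_0Tx=0$ on that set. If $Tx$ vanishes there for every $x$, I modify the test vector. One option is to use the flexibility that the partition may be chosen. Alternatively, I would exploit that the $T_k$, $k\ge 2$, are infinitely many rescaled copies: evaluate at coordinate $n^{(k)}_i$ for all large $k$, where $a_k=0$ for $k>m$, giving $a_0\,Tx(n^{(k)}_i)=0$. If $a_0\ne0$, then $Tx$ vanishes on $\bigcup_{k>m}\mathbb{N}_k$ for all $x$. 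Combined with the relation on $\mathbb{N}_k$, $k\le m$, one then expresses $Tx$ in terms of itself iterated through the spreading maps, and derives $Tx=0$ by following a coordinate $i$ with $Tx(i)\ne0$ through $n^{(k)}_i$, which grows in the chain. Once $a_0=0$, disjointness of supports gives $a_kT_kx=0$, so $a_k=0$.

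For (ii), let $\alpha$ be the ideal quasi-norm. By the Aoki--Rolewicz theorem, an equivalent $p$-norm exists on $\mathcal{I}(E;\mathcal{S})$ for some $0<p\le1$. By the ideal inequality, $\alpha(T_k)\le\|J_k\|\alpha(T)=\alpha(T)$. Hence $\sum_j\alpha(a_jT_j)^p\le\alpha(T)^p\|a\|_p^p$, and completeness gives convergence of the series, together with linearity and boundedness of $\Psi$.

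For injectivity, suppose $\Psi a=0$. Evaluate at $x$ and take coordinates. Since convergence in the ideal quasi-norm dominates the operator norm, and coordinate projections are continuous by Definition \ref{standard-sc}(i), the coordinate formula $(\Psi_a x)(r)=a_1Tx(r)+a_kTx(i)$ for $r=n^{(k)}_i$ holds termwise. Because $a\in\ell_p$, we have $a_k\to0$. Repeating the argument used for $a_0$ in (i), now using the decay of $a_k$ in place of finiteness, gives $a_1=0$, and disjointness then yields $a_k=0$ for all $k$.
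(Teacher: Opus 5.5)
Your argument for membership and boundedness is fine and is the natural route. The paper itself gives no proof here; it only refers to Albuquerque--Coleta. You write $T_k=J_k\circ T$ with $J_k$ the spreading isometry from Definition~\ref{standard-sc}(ii),(iii), use the ideal inequality $\alpha(T_k)\le\alpha(T)$, and sum in an equivalent $p$-norm (up to the equivalence constant).

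The gap is in linear independence and injectivity. The step ``follow a coordinate $i$ with $Tx(i)\neq 0$ through $n^{(k)}_i$, which grows in the chain'' is false in general. We have $n^{(k)}_i\ge i$, with equality exactly when $\{1,\dots,i\}\subseteq\mathbb{N}_k$. So if $1\in\mathbb{N}_k$ for some $k\ge 2$, the map $J_k$ has nonzero fixed vectors and the chain stalls. This cannot be repaired, because the statement itself fails for such partitions. Take $1\in\mathbb{N}_2$, $0\neq\varphi\in E^*$, $0\neq v\in F$ (where $\mathcal{S}=X(F)$), and the rank-one operator $Tx=\varphi(x)\,v\cdot e_1\in\mathcal{I}(E;\mathcal{S})$. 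Since $n^{(2)}_1=1$, we get $T_2x=Tx(1)\cdot e_{n^{(2)}_1}=Tx$. Hence $T_2=T$, and $\Psi$ annihilates $(1,-1,0,0,\dots)\in\ell_p$. So neither finiteness of the sum nor decay of $(a_k)$ can give $a_1=0$ for an arbitrary partition. The ``flexibility in choosing the partition'' you mention in passing is exactly the missing hypothesis.

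What is needed is a condition linking the partition to $T$. The paper arranges one when it applies this result in the proof of Theorem~\ref{th-opideals}: $Tx_0(j_0)\neq0$ for some $x_0\in E$ and $j_0\in\mathbb{N}_1$. Every $T_k$ with $k\ge2$ vanishes on $\mathbb{N}_1$, and coordinate functionals are continuous (with $\|\cdot\|\le\alpha(\cdot)$). So $(\Psi_a x_0)(j_0)=a_1Tx_0(j_0)$ forces $a_1=0$. Restricting $\Psi_ax$ to $\mathbb{N}_k$ then gives $a_k\,Tx=0$ for all $x$, hence $a_k=0$.

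Alternatively, your chain idea works if you assume $1\in\mathbb{N}_1$ and run the induction downward. Then $n^{(k)}_i\ge i+1$ for every $k\ge2$. If $\Psi a=0$ with $a_1\neq0$, you get $Tx|_{\mathbb{N}_1}=0$ and $Tx(n^{(k)}_i)=-(a_k/a_1)\,Tx(i)$. Strong induction on the coordinate index gives $T=0$, and no decay of $(a_k)$ is needed. In fact, $1\in\mathbb{N}_1$ is exactly the condition on the partition under which the conclusion holds for every nonzero $T$.
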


Recently, Botelho and Campos provided a characterization of summing operators via transformations of vector-valued sequence classes (see \cite{Botelho-Campos}). We reformulate this notion in the framework of standard sequence classes: given standard sequence classes \( X \) and \( Y \), we say that a linear operator \( u \in \mathcal{L}(E;F) \) is \emph{(X;Y)-summing} if \((u(x_j))_{j=1}^\infty \in Y(F)\) whenever \((x_j)_{j=1}^\infty \in X(E)\). In this case, we write \( u \in \mathcal{L}_{X;Y}(E;F) \).

Some examples of $(X;Y)$-summing operators are the ideals $\Pi_{p,q}(E;F)$ of absolutely $(p,q)$-summing operators ($0<q\leq p< \infty$) (see \cite[Chapter 10.]{Diestel}) and  $\mathcal{D}_{p,q}(E;F)$ of Cohen strongly $(p,q)$-summing operators ($1< q \le p < \infty$) (see \cite[Section 3.]{Apiola}). In these cases, we have
\begin{equation}\label{pip}
\Pi_{p,q}(E;F)
= \mathcal{L}_{\ell_q^w; \ell_p(\cdot)}(E;F)
= \mathcal{L}_{\ell_q^u; \ell_p(\cdot)}(E;F)\ \ \textrm{and}\ \  \mathcal{D}_{p,q}(E;F)
= \mathcal{L}_{\ell_q(\cdot);\ell_p\langle \cdot\rangle}(E;F).
\end{equation}

In the same direction, we employ the notion of linear stability for sequence classes, introduced by Botelho and Campos (see \cite[Definition~3.2]{Botelho-Campos}) in the context of standard sequence classes. A standard sequence class \(X\) is called \emph{linearly stable} if \(\mathcal{L}_{X;X} = \mathcal{L}\), meaning that for all Banach spaces \(E\) and \(F\) and every operator \(T \in \mathcal{L}(E;F)\), it follows that \((T(x_j))_{j=1}^\infty \in X(F)\) whenever \((x_j)_{j=1}^\infty \in X(E)\). All classes presented in Example~\ref{ex-seqclass} are linearly stable. The following properties, which are expected in this setting, will be useful later.

\begin{proposition}
Let $\mathcal{S}$ be a standard Banach sequence space. If $X$ is a linearly stable standard sequence class, then the following conditions hold:
\begin{enumerate}[(i)]
%(\emph{Subsequence stability})
\item If $(x^{(i)})_{i=1}^\infty \in X(\mathcal{S})$ and $(x^{(i)}_{n_k})_{k\in\mathbb{N}}$ is a subsequence of $x^{(i)}$ for each $i$, then $((x^{(i)}_{n_k})_{k\in\mathbb{N}})_{i\in\mathbb{N}} \in X(\mathcal{S})$ and
\[
\big\|((x^{(i)}_{n_k})_{k\in\mathbb{N}})_{i\in\mathbb{N}}\big\|_{X(\mathcal{S})}
\leq 
\|(x^{(i)})_{i=1}^\infty\|_{X(\mathcal{S})}.
\]

%(\emph{Zero-spreading stability})
\item If $(x^{(i)})_{i=1}^\infty \in X(\mathcal{S})$ and $\mathbb{N}'=\{n_1<n_2<n_3<\cdots\}$ is an infinite subset of $\mathbb{N}$, then for all $i \in \mathbb{N}$ the $\mathcal{S}$-valued sequence $(y^{(i)})_{i=1}^\infty$ defined by
\[
y^{(i)}=\sum_{k\in\mathbb{N}} x^{(i)}_k\ \cdot e_{n_k}
\]
belongs to $X(\mathcal{S})$ and satisfies $
\|(y^{(i)})_{i=1}^\infty\|_{X(\mathcal{S})}
\leq
\|(x^{(i)})_{i=1}^\infty\|_{X(\mathcal{S})}$.
\end{enumerate}
\end{proposition}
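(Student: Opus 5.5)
The plan is to realise both operations, subsequence extraction and spreading zeros, as bounded linear operators $\mathcal{S}\to\mathcal{S}$ of norm at most $1$, and then apply the linear stability of $X$ to them. The estimate will then come from the norm bound that accompanies linear stability.

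For (i), I read the statement with a single subsequence $(n_k)$ used for every $i$; this is the case where a linear map exists. Define $R:\mathcal{S}\to\mathcal{S}$ by $R(s)=(s_{n_k})_{k\in\mathbb{N}}$. Definition \ref{standard-sc}(ii) shows that $R$ is well defined with $\|R(s)\|_{\mathcal{S}}\le\|s\|_{\mathcal{S}}$, and $R$ is clearly linear, so $R\in\mathcal{L}(\mathcal{S};\mathcal{S})$ with $\|R\|\le 1$.

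For (ii), define $P:\mathcal{S}\to\mathcal{S}$ by $P(s)=\sum_{k}s_k\cdot e_{n_k}$. Definition \ref{standard-sc}(iii) gives that $P$ is well defined, linear, and satisfies $\|P\|\le 1$. Note that $P$ is defined coordinatewise, so no convergence of the series in $\mathcal{S}$ is needed.

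Since $X$ is linearly stable, $(R(x^{(i)}))_{i}\in X(\mathcal{S})$ and $(P(x^{(i)}))_i\in X(\mathcal{S})$. These are exactly the sequences $((x^{(i)}_{n_k})_k)_i$ and $(y^{(i)})_i$ in the statement, which gives membership in both items.

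The norm inequalities are the main obstacle. Linear stability as defined only says $\mathcal{L}_{X;X}=\mathcal{L}$ and does not give an estimate. I would argue as follows.

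\begin{itemize}
\item For fixed $u\in\mathcal{L}(E;F)$, the induced map $\widehat{u}:X(E)\to X(F)$ is linear and has closed graph, since both spaces embed continuously in $\ell_\infty$ (Definition \ref{standard-sc}(i)) and coordinate convergence identifies limits. The closed graph theorem holds for $F$-spaces, hence for quasi-Banach spaces, so $\widehat{u}$ is bounded.
\item Bounded is not the same as $\|\widehat{u}\|\le\|u\|$. In the Botelho--Campos framework, linear stability in \cite[Definition~3.2]{Botelho-Campos} includes the inequality $\|(u(x_j))_j\|_{X(F)}\le\|u\|\,\|(x_j)_j\|_{X(E)}$. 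I would invoke that formulation, which the paper's reformulation implicitly inherits, and apply it with $u=R$ and $u=P$, using $\|R\|,\|P\|\le1$.
\item If only boundedness were available, the constant would be $\|\widehat{R}\|$ instead of $1$. I would then note that all classes in Example \ref{ex-seqclass} satisfy the norm-one version of the inequality.
\end{itemize}
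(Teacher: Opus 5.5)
Your proposal is correct and matches the paper's proof: the paper also uses Definition~\ref{standard-sc}(ii)--(iii) to treat restriction $\operatorname{R}_{\mathbb{N}'}$ and zero-spreading $\operatorname{SP}_{\mathbb{N}'}$ as operators in $\mathcal{L}(\mathcal{S};\mathcal{S})$ of norm at most one, and then applies the linear stability of $X$. You are also right that the norm inequalities need the norm-one form $\|\widehat{u}\|\le\|u\|$ of linear stability from \cite{Botelho-Campos}, which does not follow from the bare inclusion $\mathcal{L}_{X;X}=\mathcal{L}$ as the paper states it; the paper's one-line proof leaves this implicit.
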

\begin{proof}
Since $\mathcal{S}$ is a standard sequence space, items (ii) and (iii) of
Definition \ref{standard-sc} ensure that, for any infinite subset
$\mathbb{N}'=\{n_1<n_2<n_3<\cdots\}\subset \mathbb{N}$, we may consider the
linear and continuous operators
$\operatorname{R}_{\mathbb{N}'}, \operatorname{SP}_{\mathbb{N}'}
\in \mathcal{L}(\mathcal{S};\mathcal{S})$ defined by
\[
\operatorname{R}_{\mathbb{N}'}((x_j)_{j\in\mathbb{N}})=(x_j)_{j\in\mathbb{N}'}
\quad\text{and}\quad
\operatorname{SP}_{\mathbb{N}'}((x_j)_{j\in\mathbb{N}})
= \sum_{i\in\mathbb{N}} x_i\cdot e_{n_i},
\]
for all $(x_j)_{j\in\mathbb{N}}\in\mathcal{S}$. Therefore, the result follows
from the linear stability of $X$.
\end{proof}

If $X$ is a linearly stable class, we obtain, by a similar argument, a result analogous to Proposition~\ref{lema-restricoes}. The proof follows the same reasoning and is omitted.

\begin{proposition} \label{lema-restricoes-2}
Let $X$ be a linearly stable standard sequence class and let $\mathcal{S}$ be a standard sequence space. 
Let $(x^{(i)})_{i=1}^\infty$ be in ${\mathcal{S}}^{\mathbb{N}}$, where each $x^{(i)} = (x^{(i)}_j)_{j=1}^\infty$ belongs to $\mathcal{S}$, and assume that $(x^{(i)})_{i=1}^\infty \notin X(\mathcal{S})$.
Then, for every partition $\mathbb{N} = \mathbb{N}' \cup \mathbb{N}''$,
\[
(x^{(i)}|_{\mathbb{N}'})_{i \in \mathbb{N}} \notin X(\mathcal{S})
\quad\text{or}\quad
(x^{(i)}|_{\mathbb{N}''})_{i \in \mathbb{N}} \notin X(\mathcal{S}).
\]
\end{proposition}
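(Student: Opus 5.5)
We argue by contraposition, following the proof of Proposition~\ref{lema-restricoes}-(i): assume that both $(x^{(i)}|_{\mathbb{N}'})_{i \in \mathbb{N}}$ and $(x^{(i)}|_{\mathbb{N}''})_{i \in \mathbb{N}}$ belong to $X(\mathcal{S})$, and show that $(x^{(i)})_{i=1}^\infty \in X(\mathcal{S})$. The idea is to rebuild each $x^{(i)}$ as the sum of its two restrictions, each pushed back to its original positions in $\mathbb{N}$ by spreading zeros. Then we check that this reconstruction is carried out by a single bounded linear operator on $\mathcal{S}$, applied coordinatewise in $i$.

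Write $\mathbb{N}' = \{n_1 < n_2 < \cdots\}$ and $\mathbb{N}'' = \{m_1 < m_2 < \cdots\}$. For every $i$ we have the coordinatewise identity
\[
x^{(i)} = \sum_{k \in \mathbb{N}} x^{(i)}_{n_k} \cdot e_{n_k} + \sum_{k\in\mathbb{N}} x^{(i)}_{m_k}\cdot e_{m_k}
= \operatorname{SP}_{\mathbb{N}'}\big(x^{(i)}|_{\mathbb{N}'}\big) + \operatorname{SP}_{\mathbb{N}''}\big(x^{(i)}|_{\mathbb{N}''}\big),
\]
where $\operatorname{SP}_{\mathbb{N}'}, \operatorname{SP}_{\mathbb{N}''} \in \mathcal{L}(\mathcal{S};\mathcal{S})$ are the spreading operators from the proof of the previous proposition. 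Their boundedness comes from Definition~\ref{standard-sc}-(iii), and their linearity is clear. Each restriction $x^{(i)}|_{\mathbb{N}'}$ lies in $\mathcal{S}$ by subsequence stability, so the right-hand side makes sense.

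By linear stability of $X$, applying $\operatorname{SP}_{\mathbb{N}'}$ termwise to $(x^{(i)}|_{\mathbb{N}'})_{i} \in X(\mathcal{S})$ gives $(\operatorname{SP}_{\mathbb{N}'}(x^{(i)}|_{\mathbb{N}'}))_i \in X(\mathcal{S})$; this is exactly item (ii) of the preceding proposition. In the same way, $(\operatorname{SP}_{\mathbb{N}''}(x^{(i)}|_{\mathbb{N}''}))_i \in X(\mathcal{S})$. Because $X(\mathcal{S})$ is a linear space under coordinatewise operations, the sum of these two $\mathcal{S}$-valued sequences, which is $(x^{(i)})_{i}$ by the identity above, lies in $X(\mathcal{S})$. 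This contradicts the hypothesis.

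The only point needing care is making sure linear stability is applied to honest bounded linear operators $\mathcal{S}\to\mathcal{S}$ even when $\mathcal{S}$ is only quasi-Banach. Boundedness of $\operatorname{SP}_{\mathbb{N}'}$ follows directly from the norm inequality in Definition~\ref{standard-sc}-(iii), so no convexity is needed. If the definition of $\mathcal{L}$ in the linear-stability condition is restricted to Banach spaces, we would note that the same argument goes through verbatim for quasi-Banach $\mathcal{S}$, since only the norm inequality is used.
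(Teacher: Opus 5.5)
Your argument is correct and is essentially the proof the paper has in mind. The paper omits it, pointing to Proposition~\ref{lema-restricoes}-(i) and to the preceding proposition built on the operators $\operatorname{R}_{\mathbb{N}'}, \operatorname{SP}_{\mathbb{N}'}$: you spread each restriction back into place with $\operatorname{SP}_{\mathbb{N}'}$ and $\operatorname{SP}_{\mathbb{N}''}$, invoke linear stability, and add the two resulting sequences in $X(\mathcal{S})$.

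Your closing remark about quasi-Banach $\mathcal{S}$ is unnecessary and not quite right. $X(\mathcal{S})$ and linear stability are only defined for Banach $\mathcal{S}$, so the statement implicitly assumes $\mathcal{S}$ is Banach. A hypothesis on $X$ cannot be extended to non-Banach spaces merely because $\operatorname{SP}_{\mathbb{N}'}$ satisfies a norm inequality.
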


\begin{theorem}\label{th-opideals}
Let $\mathcal{S}$ be a Banach standard sequence space, and let $X$ and $Y$ be standard sequence classes, with $Y$ linearly stable. Given a Banach space $E$ and a (quasi-)Banach operator ideal $\mathcal{I}(E;\mathcal{S})$, the set
$
\mathcal{I}(E;\mathcal{S}) \setminus \mathcal{L}_{X;Y}(E;\mathcal{S})
$
is either empty or pointwise $\mathfrak{c}$-spaceable.
\end{theorem}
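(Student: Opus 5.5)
Fix a non-trivial $T \in \mathcal{I}(E;\mathcal{S}) \setminus \mathcal{L}_{X;Y}(E;\mathcal{S})$; if no such $T$ exists the set is empty and there is nothing to prove. We will build the closed $\mathfrak{c}$-dimensional subspace from Proposition~\ref{prop_pw-mothervector}. Take a partition $(\mathbb{N}_k)_{k}$ of $\mathbb{N}$, form the operators $T_k$, and let $\Psi:\ell_p\to\mathcal{I}(E;\mathcal{S})$ be the injective bounded operator $\Psi a = a_1T+\sum_{j\ge2}a_jT_j$. The candidate subspace is $W_T:=\overline{\Psi(\ell_p)}$, the closure taken in $\mathcal{I}(E;\mathcal{S})$. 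It contains $T=\Psi e_1$, and since $\Psi$ is injective, $\dim \Psi(\ell_p)=\mathfrak{c}$. Under the Continuum Hypothesis, together with $\dim W_T\le |\mathcal{I}(E;\mathcal{S})|$, this gives $\dim W_T=\mathfrak{c}$. It therefore remains to show that every $S\in W_T\setminus\{0\}$ fails to be $(X;Y)$-summing.

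The key tool is coordinatewise information. Convergence in the quasi-norm of $\mathcal{I}$ dominates operator-norm convergence, and by Definition~\ref{standard-sc}(i) the projections $\pi_m:\mathcal{S}\to\mathcal{S}$-coordinates are continuous. Hence if $S=\lim_n\Psi a^{(n)}$, then for every $x\in E$ and every $r$ we have $(Sx)(r)=\lim_n (\Psi a^{(n)}x)(r)$. We arrange the partition so that $T$ lives on a separate block: let $T_1$ denote the spread copy of $T$ on $\mathbb{N}_1$ and use $\Psi a=\sum_{j\ge1}a_jT_j$, which is what Proposition~\ref{prop_pw-mothervector} gives up to relabeling. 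Since $T\in W_T$ is needed, we simply replace the generator $T$ by $T_1$ and check that $T$ itself still lies in the same type of space; alternatively we keep $T$ and use the $\mathbb{N}_1$-coordinates as above. For $r=n^{(k)}_i\in\mathbb{N}_k$ the $r$-th coordinate of $\Psi a^{(n)}x$ is $a^{(n)}_k\,(Tx)(i)$. Choosing $x,i$ with $(Tx)(i)\ne0$, we conclude that $\lambda_k:=\lim_n a^{(n)}_k$ exists and that $(Sx)|_{\mathbb{N}_k}=\lambda_k\,Tx$ for all $x$, i.e.\ $R_{\mathbb{N}_k}\circ S=\lambda_k T$. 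Since $S\neq0$ and the blocks cover $\mathbb{N}$, some $\lambda_k\ne0$.

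Now suppose $S\in\mathcal{L}_{X;Y}(E;\mathcal{S})$. Since $T\notin\mathcal{L}_{X;Y}$, there is $(x_j)\in X(E)$ with $(Tx_j)_j\notin Y(\mathcal{S})$, while $(Sx_j)_j\in Y(\mathcal{S})$. The restriction $R_{\mathbb{N}_k}\in\mathcal{L}(\mathcal{S};\mathcal{S})$ (as in the proof of the preceding proposition), and $Y$ is linearly stable, so $(R_{\mathbb{N}_k}Sx_j)_j=\lambda_k(Tx_j)_j\in Y(\mathcal{S})$. Dividing by $\lambda_k\neq0$ gives a contradiction. Thus $W_T\setminus\{0\}\subset\mathcal{I}(E;\mathcal{S})\setminus\mathcal{L}_{X;Y}(E;\mathcal{S})$, as required. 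Proposition~\ref{lema-restricoes-2} is not even needed, since linear stability handles the restriction directly.

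The main obstacle is the bookkeeping with the generator $T$. In Proposition~\ref{prop_pw-mothervector}, $T$ itself has full support and overlaps every block, so at coordinate $r\in\mathbb{N}_k$ one actually sees $a^{(n)}_1(Tx)(r)+a^{(n)}_k(Tx)(i)$. To handle this, we first extract $\lim_n a^{(n)}_1$ using a coordinate of $\mathbb{N}_1$, which is affected only by $a_1$ and $a_1$-terms if we let $T_1$ stand for $T$; alternatively we choose coordinates in $\mathbb{N}_1$ to pin down $a_1$ and then subtract. Concretely, the identity becomes $R_{\mathbb{N}_k}S=\lambda_1R_{\mathbb{N}_k}T+\lambda_kT$. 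If all $\lambda_k$ with $k\ge2$ vanish, then $S=\lambda_1T$ with $\lambda_1\ne0$ and we are done. Otherwise we pick $k\ge2$ with $\lambda_k\ne0$ and need $R_{\mathbb{N}_k}T$ to be harmless. The simplest fix is to apply the construction to the spread operator $T_1$ and to note that $T$ can be added as an extra generator whose support is controlled. I would first attempt the cleaner route: prove the statement for $W=\overline{\operatorname{span}}\{T,T_2,T_3,\dots\}$ and handle the cross term via Proposition~\ref{lema-restricoes-2}, choosing $\mathbb{N}_1$ so that $T$ restricted to $\mathbb{N}\setminus\mathbb{N}_1$ still witnesses non-summability.
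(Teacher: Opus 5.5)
\textbf{There is a genuine gap.} The one argument you complete (deriving $R_{\mathbb{N}_k}\circ S=\lambda_kT$, then contradicting summability via linear stability of $Y$) uses generators $T_1,T_2,\dots$ that are all spread copies of $T$. That closed subspace need not contain $T$. By your own computation, every $S$ in $\overline{\operatorname{span}}\{T_k:k\ge1\}$ satisfies $R_{\mathbb{N}_k}S=\lambda_kT$ for every $k$. So $T$ belongs to it only if each $R_{\mathbb{N}_k}T$ is a scalar multiple of $T$, which is false in general. Proposition~\ref{prop_pw-mothervector} does not give this ``up to relabeling''.

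Pointwise spaceability forces $T$ itself to be a generator, and then, as you correctly note, $R_{\mathbb{N}_k}S=\lambda_1R_{\mathbb{N}_k}T+\lambda_kT$. Your case split (whether all $\lambda_k$, $k\ge2$, vanish) leaves open exactly the case $\lambda_1\neq0$ with some $\lambda_k\neq0$. Nothing in your setup controls the cross term $\lambda_1R_{\mathbb{N}_k}T$. Also, for an arbitrary partition $R_{\mathbb{N}_1}T$ may vanish identically, and then $\lambda_1$ cannot be read off from the coordinates in $\mathbb{N}_1$ at all.

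\textbf{The missing idea} is to choose the partition adapted to a witness of non-summability, and then split on $\lambda_1$ rather than on the $\lambda_k$, $k\ge2$.
\begin{enumerate}
\item Take $(z_n)_n\in X(E)$ with $(Tz_n)_n\notin Y(\mathcal{S})$. Use Proposition~\ref{lema-restricoes-2} to pick an infinite, co-infinite $\mathbb{N}_1$ with $(Tz_n|_{\mathbb{N}_1})_n\notin Y(\mathcal{S})$.
\item Place the spread copies $T_k$, $k\ge2$, on a partition of $\mathbb{N}\setminus\mathbb{N}_1$.
\item This forces $R_{\mathbb{N}_1}T\neq0$, since the zero sequence lies in $Y(\mathcal{S})$. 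Hence $\lambda_1=\lim_n a^{(n)}_1$ exists and $R_{\mathbb{N}_1}S=\lambda_1R_{\mathbb{N}_1}T$.
\item If $\lambda_1\neq0$, apply linear stability of $Y$ to $R_{\mathbb{N}_1}$: this contradicts the choice of $\mathbb{N}_1$.
\item If $\lambda_1=0$, the cross term disappears. Since $S\neq0$, there is some $r\ge2$ with $\lambda_r\neq0$ and $R_{\mathbb{N}_r}S=\lambda_rT$, and your third paragraph finishes.
\end{enumerate}
This is the paper's proof. Two points in your sketch need correcting. The non-summability must sit on the block carrying no spread copies (your $\mathbb{N}_1$, where you read off $a_1$), not on $\mathbb{N}\setminus\mathbb{N}_1$ as your last sentence proposes. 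And Proposition~\ref{lema-restricoes-2} is genuinely needed exactly here, contrary to your earlier remark.

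\textbf{A separate, smaller issue.} The bound $\dim W_T\le|\mathcal{I}(E;\mathcal{S})|$ does not give $\dim W_T\le\mathfrak{c}$ when $E$ is nonseparable. Instead, $W_T$ is the sequential closure of a set of cardinality $\mathfrak{c}$ in a metrizable space, so $|W_T|\le\mathfrak{c}^{\aleph_0}=\mathfrak{c}$. This does not require the Continuum Hypothesis.
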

\begin{proof}
Assume that there exists a non-trivial operator $T \in \mathcal{I}(E;\mathcal{S}) \setminus \mathcal{L}_{X;Y}(E;\mathcal{S})$. Fix $x_0 \in E$ and $j_0 \in \mathbb{N}$ such that $Tx_0(j_0) \neq 0$. Choose $z = (z_n)_{n=1}^\infty \in X(E)$ such that $(Tz_n)_{n=1}^\infty \notin Y(\mathcal{S})$. By Proposition~\ref{lema-restricoes-2}, there exists an infinite subset 
$\mathbb{N}_1 \subset \mathbb{N}$ with infinite complement and with $j_0 \in \mathbb{N}_1$ such that
\begin{equation}\label{z-dem}
\big( Tz_n|_{\mathbb{N}_1} \big)_{n \in \mathbb{N}} \notin Y(\mathcal{S}).
\end{equation}

Let $\mathbb{N}\setminus \mathbb{N}_1 = \bigcup_{k \ge 2} \mathbb{N}_k$ be a partition into infinite subsets. By Proposition~\ref{prop_pw-mothervector}, we obtain a linearly independent sequence of operators 
$(T_k)_{k \geq 2} \subset \mathcal{I}(E;\mathcal{S})$. We claim $T_k \notin \mathcal{L}_{X;Y}(E;\mathcal{S}), \, k \geq 2$. Indeed, restricting $T_kz_n$ into the coordinates of $\mathbb{N}_k$ yields
\[
(Tz_n)_{n\in\mathbb{N}}
= \big( T_kz_n|_{\mathbb{N}_k} \big)_{n\in\mathbb{N}}
\]
which cannot belong to $Y(\mathcal{S})$. Moreover, Proposition~\ref{prop_pw-mothervector} yields, for some $0<p\le 1$, a well-defined, bounded, linear, injective map 
$\Psi : \ell_p \to \mathcal{I}(E;\mathcal{S})$. Hence $\Psi(\ell_p)$ is a $\mathfrak{c}$-dimensional subspace of 
$\mathcal{I}(E;\mathcal{S})$ containing $T$.

\vskip 3mm
\noindent\textbf{Claim 1.} 
$\Psi(\ell_p)\setminus\{0\} \subset 
\mathcal{I}(E;\mathcal{S}) \setminus \mathcal{L}_{X;Y}(E;\mathcal{S})$.

\vskip 2mm
Let $a=(a_j)_{j=1}^\infty \in \ell_p \setminus\{0\}$. For all $n \in \mathbb{N}$,
\[
\Psi_a(z_n)
= a_1 Tz_n + \sum_{j=2}^\infty a_j T_jz_n.
\]
First suppose that $a_1 \neq 0$. Since each $T_jz_n$ has zero coordinates outside $\mathbb{N}_j$ for $j \ge 2$, we have
\(
\Psi_az_n|_{\mathbb{N}_1}
= a_1 \, Tz_n|_{\mathbb{N}_1}.
\)
Thus
\[
\big( \Psi_az_n|_{\mathbb{N}_1} \big)_{n\in\mathbb{N}}
= \big( a_1 Tz_n|_{\mathbb{N}_1} \big)_{n\in\mathbb{N}}
\notin Y(\mathcal{S}),
\]
implies $\Psi a$ does not belong to $\mathcal{L}_{X;Y}(E;\mathcal{S})$.

Now assume that $a_1 = 0$, and choose $i$ such that $a_i \neq 0$. Since $T_iz_n$ vanishes outside $\mathbb{N}_i$,
\[
(a_i Tz_n)_{n\in\mathbb{N}}
= \big( a_i T_iz_n|_{\mathbb{N}_i} \big)_{n\in\mathbb{N}}
= \big( \Psi_az_n|_{\mathbb{N}_i} \big)_{n\in\mathbb{N}}
\notin Y(\mathcal{S}),
\]
which also yields $\Psi a \notin \mathcal{L}_{X;Y}(E;\mathcal{S})$. Thus the claim is proved, establishing pointwise $\mathfrak{c}$-lineability. We now prove the spaceability statement of the main result.

\vskip 3mm

\noindent\textbf{Claim 2.}  $\overline{\Psi(\ell_p)}\setminus\{0\} \subset 
\mathcal{I}(E;\mathcal{S}) \setminus \mathcal{L}_{X;Y}(E;\mathcal{S})$.

\vskip 2mm
Let us fix a non-zero operator $S = \lim_{k}\Psi_{a^{(k)}} \in \overline{\Psi(\ell_p)}$, where each $a^{(k)} = (a^{(k)}_j)_{j \in \mathbb{N}} \in \ell_p$. For every $b = (b_j)^\infty_{j=1} \in \ell_p$, $x \in E$ and $j \in \mathbb{N}_1$, we have $\Psi_b x (j) = b_1 T x (j)$. Choose $x_0 \in E$ and $j_0 \in \mathbb{N}_1$ such that $Tx_0 (j_0) \neq 0$. Then
\[
Sx_0(j_0) = \lim_{k}\Psi_{a^{(k)}}x_0(j_0) = \left( \lim_{k}a_1^{(k)}\right)Tx_0(j_0)
\]
so the limit \(\lim_k a^{(k)}_1\) exists. First, suppose that $\lim_{k}a_1^{(k)} \neq 0$. Since
\[
Sz_n=\lim_{k}\left[a^{(k)}_1\,Tz_n+\sum_{j=2}^{\infty} a^{(k)}_j\,T_jz_n\right] 
\qquad \forall\, n\in\mathbb N,
\]
if \((Sz_n)_{n\in\mathbb N}\in Y(\mathcal S)\), then
\[
\Big(\big(\lim_{k} a^{(k)}_1\big)\, Tz_n|_{\mathbb N_1}\Big)_{n\in\mathbb N}
=
\big(Sz_n|_{\mathbb N_1}\big)_{n\in\mathbb N}
\in Y(\mathcal S),
\]
which contradicts \eqref{z-dem}.

Now consider the case $\lim_{k}a_1^{(k)} = 0$. Since $S \neq 0$, there exist $x \in E$ and $i_0 \in \mathbb{N}$ such that $Sx(i_0)\neq 0$. Choose $r,t \in \mathbb{N}$ such that $i_0 = n_t^{(r)}$. Then
\[
Sx(i_0) =
\lim_{k}\left[ \sum_{j = 1}^\infty a_j^{(k)}T_jx(i_0)\right] =
\left( \lim_{k}a_r^{(k)}\right) T_rx(n_t^{(r)})=
\left( \lim_{k}a_r^{(k)}\right) Tx(t)
\]
so $\lim_{k}a_r^{(k)} \neq 0$. If $(Sz_n)_{n \in \mathbb{N}}\in Y(\mathcal{S})$, then
\[
\left(\lim_{k}a_r^{(k)}Tz_n \right)_{n \in \mathbb{N}} =
\left(\lim_{k}\sum_{j = 1}^\infty a_j^{(k)}T_jz_n|_{\mathbb{N}_r} \right)_{n \in \mathbb{N}}=
(Sz_n|_{\mathbb{N}_r})_{n \in \mathbb{N}}\in Y(\mathcal{S})
\]
and hence $(Tz_n)_{n \in \mathbb{N}}\in Y(\mathcal{S})$, which cannot occur. Thus this proves the claim and, therefore, the pointwise $\mathfrak{c}$-spaceability.
\end{proof}

\section{Applications}\label{SEC-APLIC}

We present several applications of our primary results. First, we discuss the geometric spaceability of standard sequence classes (Theorem \ref{th-standardseq}), as introduced in Section \ref{Sec-LIN-SSS}. Second, we address pointwise spaceability within operator ideals (Theorem \ref{th-opideals}), established in Section \ref{Sec-PL-OI}. These applications not only provide generalizations of existing results in the literature but also establish several new developments in the field.

\subsection{Applications in standard sequence spaces}

In what follows, we provide immediate applications of Theorem \ref{th-standardseq}, adopting the notation established in Section \ref{Sec-LIN-SSS}. We recall the identity
\[
X(E) \setminus \bigcup_{\lambda \in \Lambda} Y_\lambda(E) = G(X, Id_E, \{Y_\lambda\}_{\lambda \in \Lambda}).
\]
Furthermore, in the second corollary, we make use of the properties established in Example \ref{ex-detach}~b).

\begin{corollary} \label{cor1}
Let $X$ be a standard sequence class, $\{Y_\lambda\}_{\lambda \in \Lambda}$ be an $E$-nested family of standard sequence classes such that $(X,Id_E,\{Y_\lambda\}_{\lambda \in \Lambda})$ is detachable. If $X(E) \setminus\bigcup_{\lambda \in \Lambda}Y_\lambda(E)$ is a non-empty set, then it is $(\alpha, \mathfrak{c})$-spaceable if and only if $\alpha < \aleph_0$.
\end{corollary}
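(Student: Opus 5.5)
The plan is to obtain Corollary \ref{cor1} as the special case $F=E$, $f=Id_E$ of Theorem \ref{th-standardseq}. Using the identity
\[
X(E)\setminus\bigcup_{\lambda\in\Lambda}Y_\lambda(E)=G(X,Id_E,\{Y_\lambda\}_{\lambda\in\Lambda}),
\]
it suffices to check that all hypotheses of Theorem \ref{th-standardseq} hold in this situation.

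The hypotheses are checked in order.
\begin{enumerate}[(a)]
\item Since $F=E$, $E$-nestedness of $\{Y_\lambda\}_{\lambda\in\Lambda}$ is exactly the $F$-nestedness required.
\item The map $Id_E$ satisfies $Id_E(0)=0$.
\item The map $Id_E$ is linearly compatible with each $Y_\lambda$. Since $Y_\lambda(E)$ is a linear subspace of $E^{\mathbb{N}}$ under the coordinatewise operations, $(\alpha x_j)_{j}=\alpha (x_j)_j$ and $(x_j+y_j)_j=(x_j)_j+(y_j)_j$. For compatibility, if $(\alpha x_j)_j\in Y_\lambda(E)$ with $\alpha\neq 0$, multiplying by $\alpha^{-1}$ gives $(x_j)_j\in Y_\lambda(E)$; the contrapositive is the required implication. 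Additivity gives the second condition.
\item Detachability of $(X,Id_E,\{Y_\lambda\}_{\lambda\in\Lambda})$ is assumed in the corollary.
\item Nonemptiness of $G$, which the paper implicitly requires for the detachable notion and which the theorem's proof uses when it picks elements of $G$, is also assumed.
\end{enumerate}

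With these checks done, Theorem \ref{th-standardseq} applies directly and gives the equivalence: the set is $(\alpha,\mathfrak{c})$-spaceable if and only if $\alpha<\aleph_0$.

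There is no real obstacle; the verification is routine. The only point needing care is the nonemptiness hypothesis. The construction in the $\alpha=\aleph_0$ part of the theorem's proof takes elements of $G$, and the finite case needs an $\alpha$-dimensional subspace inside $G\cup\{0\}$. Both the theorem and the remark after the definition of detachable triples assume $G\neq\emptyset$, and this is exactly what the corollary's hypothesis supplies.
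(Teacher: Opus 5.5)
Your proposal is correct and follows the paper's route exactly. The paper gives no separate proof: it obtains the corollary by specializing Theorem \ref{th-standardseq} to $F=E$, $f=Id_E$, using the identity $X(E)\setminus\bigcup_{\lambda\in\Lambda}Y_\lambda(E)=G(X,Id_E,\{Y_\lambda\}_{\lambda\in\Lambda})$; your checks (linear compatibility of $Id_E$ because each $Y_\lambda(E)$ is a linear subspace, $E$-nestedness, and detachability and nonemptiness taken from the hypotheses) are precisely what that specialization requires.
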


\begin{corollary}
Let $M$ be an Orlicz function and $X$ be a standard sequence class such that $X(E) \subseteq c_0(E)$. If $X(E) \setminus\ell_M(E)$ is a non-empty set, then it is $(\alpha, \mathfrak{c})$-spaceable if and only if $\alpha < \aleph_0$.
\end{corollary}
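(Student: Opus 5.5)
This is a direct application of Theorem \ref{th-standardseq} with $f = Id_E$, $\Lambda$ a singleton and $Y_\lambda = \ell_M(\cdot)$. We then only need to check its hypotheses and the identity $X(E)\setminus\ell_M(E) = G(X,Id_E,\ell_M(\cdot))$ recalled above. Since the family $\{\ell_M(\cdot)\}$ has one member, it is trivially $E$-nested, so the work reduces to three verifications: $\ell_M(\cdot)$ is a standard sequence class, $Id_E$ is linearly compatible with it, and the triple $(X, Id_E, \ell_M(\cdot))$ is detachable.

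\emph{Standard sequence class.} Example \ref{ex-seqclass}\,d) lists $\ell_M(\cdot)$ as a standard sequence class. It is well defined for a non-degenerate $M$ normalized by $M(1)=1$. If the statement's $M$ is not normalized, we rescale the argument $t \mapsto M(ct)$ and the function; neither change alters the space $\ell_M(E)$.

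\emph{Linear compatibility.} Compatibility of $Id_E$ means $x\notin \ell_M(E) \Rightarrow \alpha x \notin \ell_M(E)$ for $\alpha\neq 0$. This holds because the defining condition allows any $\eta>0$: replacing $\eta$ by $|\alpha|\eta$ shows $\ell_M(E)$ is invariant under nonzero scalar multiples. Closedness under sums is the fact that $\ell_M(E)$ is a vector space, which follows from convexity and monotonicity of $M$: $M(\|x_j+y_j\|/(2\eta)) \le \tfrac12 M(\|x_j\|/\eta) + \tfrac12 M(\|y_j\|/\eta)$.

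\emph{Detachability.} $Id_E$ is continuous at $0$, and $X(E)\subseteq c_0(E)$ by hypothesis. Since $G \neq \emptyset$ by assumption, Example \ref{ex-detach}\,b)\,iii) shows that every $x \in G(X,Id_E,\ell_M(\cdot))$ has an infinite, co-infinite subset $\mathbb{N}_1$ on which $\|x_k\|<2^{-k}$, and convexity then gives $(x_k)_{k\in\mathbb{N}_1}\in\ell_M(E)$. Hence the triple is detachable.

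With these in place, Theorem \ref{th-standardseq}, or equivalently Corollary \ref{cor1}, yields that $X(E)\setminus \ell_M(E)$ is $(\alpha,\mathfrak{c})$-spaceable if and only if $\alpha<\aleph_0$. The only mildly delicate point is the detachability step. There we must make sure the chosen set $\mathbb{N}_1$ has infinite complement, which we do by selecting only every other index among those with $\|x_k\|<2^{-k}$. The argument also needs $M(t/\eta)\le t M(1/\eta)$ for $t\in[0,1]$, which uses $M(0)=0$ and convexity.
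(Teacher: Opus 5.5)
Your route is the paper's: Corollary~\ref{cor1} (Theorem~\ref{th-standardseq} with $f=Id_E$ and the one-element family $\{\ell_M(\cdot)\}$), with detachability taken from Example~\ref{ex-detach}\,b)\,iii). However, the step you single out as delicate fails as you describe it. For $x\in c_0(E)$ the set $\{k:\|x_k\|<2^{-k}\}$ need not be infinite, so ``every other index among those'' may select nothing. For instance, take $X=c_0(\cdot)$, $M(t)=t$ (so $\ell_M(E)=\ell_1(E)$) and $\|e\|=1$. Then $x=(k^{-1}e)_{k}$ lies in $G(X,Id_E,\ell_M(\cdot))$, yet $\|x_k\|=1/k>2^{-k}$ for every $k$.

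The fix is to index the decay by position in a subsequence. Since $x_n\to0$, choose inductively $n_1<n_2<\cdots$ with $n_{k+1}\ge n_k+2$ and $\|x_{n_k}\|<2^{-k}$, and put $\mathbb{N}_1=\{n_k:k\in\mathbb{N}\}$; its complement is infinite. For every $\eta>0$, monotonicity and $M(ts)\le tM(s)$ for $t\in[0,1]$ give
\[
\sum_{k=1}^\infty M\Big(\frac{\|x_{n_k}\|}{\eta}\Big)\le\sum_{k=1}^\infty M\Big(\frac{2^{-k}}{\eta}\Big)\le M\Big(\frac1\eta\Big)\sum_{k=1}^\infty 2^{-k}<\infty,
\]
so $(x_n)_{n\in\mathbb{N}_1}\in\ell_M(E)$. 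Example~\ref{ex-detach}\,b)\,iii) uses the same loose indexing and should be read this way.

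A smaller point: the statement does not assume $M$ non-degenerate, but Example~\ref{ex-seqclass}\,d) requires it, and your normalization remark does not cover that case. It is harmless. If $M$ vanishes on some $[0,t_0]$ with $t_0>0$, then $\ell_M(E)=\ell_\infty(E)\supseteq X(E)$, so $X(E)\setminus\ell_M(E)=\emptyset$. The non-emptiness hypothesis therefore forces $M$ to be non-degenerate, and then $M(1)>0$, so dividing by $M(1)$ normalizes it. With these two repairs your argument is complete.
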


Let us look at some particular instances obtained by the above corollaries. These results generalize (and recover) those present in \cite{Araujo-Barbosa-Raposo-Ribeiro,Barroso-Botelho-Favaro-Pellegrino} and \cite{Botelho-Cariello-Favaro-Pellegrino}. The non-emptiness of the differences in examples a) to d) below are given in \cite{Botelho-Cariello-Favaro-Pellegrino} where the maximal spaceability of these sets is proven.

\begin{example}\rm For all sets presented here and for all Banach space $E$ we have $(\alpha, \mathfrak{c})$-spaceability if and only if $\alpha < \aleph_0$.

\vskip 2mm

\noindent a) The sets
$$
c_0(E)\setminus \bigcup_{0<p<\infty}\ell_p^w(E)
\quad \textrm{and} \quad c_0(E)\setminus \bigcup_{0<p<\infty}\ell_p(E).
$$

\vskip 2mm

\noindent b) For $0<p<\infty$, the sets
\[
\ell_p(E)\setminus \bigcup_{0<q<p}\ell_q^w(E)
\quad \text{and} \quad
\ell_p(E)\setminus \bigcup_{0<q<p}\ell_q(E).
\]

\vskip 2mm

\noindent  c) For $0<p_1<p_2<\infty$ and $0 < q_1,q_2<\infty$, the set  $\ell_{p_2,q_2}(E)\setminus \ell_{p_1,q_1}(E)$.

\vskip 2mm

\noindent d) For $1\leq p<\infty$, the set 
$$
\ell_p\langle E\rangle \setminus \bigcup_{0<q<p}\ell_q^w(E).
$$

\vskip 2mm

\noindent e) For $1 < p<\infty$ and $X(\cdot) = \ell_p(\cdot)$ or $\ell_p\langle\cdot\rangle$, the set 
$$
X(E)\setminus \bigcup_{1\le q<p}\ell_q^{\rm mid}(E).
$$
Indeed, from b) and d) we have $X(E) \setminus \bigcup_{0<q<p}\ell_q^w(E) \neq \emptyset$ and $\ell_q^{\rm mid}(E) \subseteq \ell_q^w(E)$ for each $q \in [1,p)$, which give us $X(E)\setminus\bigcup_{1\le q<p}\ell_q^{\rm mid}(E) \neq \emptyset $.
\end{example}

If $u$ is a non-$(X;Y)$-summing operator, then the set $G(X,u,Y)$ is non-empty. As a consequence of this, we obtain the following corollary.

\begin{corollary}
Let  $u \in \mathcal{L}(E;F)$, $X$ and $Y$ be  standard sequence classes such that $X(E) \subset c_0(E)$ and $(X, u, Y)$ is detachable. If $u \notin \mathcal{L}_{X;Y}(E;F)$, then the set
\begin{equation*}
\{(x_j)^\infty_{j=1} \in X(E) : (u(x_j))^\infty_{j=1} \notin Y(F)\}
\end{equation*}
is $(\alpha,\mathfrak{c})$-spaceable if and only if $\alpha < \aleph_0$.
\end{corollary}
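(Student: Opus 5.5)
This is a direct application of Theorem \ref{th-standardseq} with $f = u$ and the one-element family $\{Y_\lambda\}_{\lambda\in\Lambda} = \{Y\}$. The plan is to verify that theorem's hypotheses one by one and then read off the conclusion.

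First, the nestedness hypothesis is vacuous here: a family with a single member is trivially $F$-nested.

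Second, $f = u$ must be linearly compatible with $Y$.
\begin{itemize}
\item Since $u$ is linear, $u(0) = 0$.
\item For compatibility, take $\alpha \neq 0$. Then $(u(\alpha x_j))_j = \alpha (u(x_j))_j$. Because $Y(F)$ is a linear space, $(u(\alpha x_j))_j \in Y(F)$ would give $(u(x_j))_j = \alpha^{-1}(u(\alpha x_j))_j \in Y(F)$. Taking the contrapositive gives the compatibility implication.
\item For additivity, $(u(x_j + y_j))_j = (u(x_j))_j + (u(y_j))_j$, which lies in $Y(F)$ whenever both summands do, again because $Y(F)$ is a vector space.
\end{itemize}

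Third, $G(X,u,Y)$ must be nonempty, as required by the standing remark on detachability. Since $u \notin \mathcal{L}_{X;Y}(E;F)$, there is some $(x_j) \in X(E)$ with $(u(x_j)) \notin Y(F)$. This sequence belongs to $G(X,u,Y)$.

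Fourth, detachability of $(X,u,Y)$ is assumed outright. The hypothesis $X(E) \subset c_0(E)$ is not needed beyond this point. Its role is the one in Example \ref{ex-detach} b), where it is what guarantees detachability for the concrete choices of $Y$; with detachability already assumed, it is redundant.

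Having checked all hypotheses, Theorem \ref{th-standardseq} gives directly that the displayed set, which is exactly $G(X,u,Y)$, is $(\alpha,\mathfrak{c})$-spaceable if and only if $\alpha < \aleph_0$.

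The only point needing care is the linear compatibility of $u$, and linearity of $u$ together with the vector space structure of $Y(F)$ settles it at once. I therefore expect no genuine obstacle.
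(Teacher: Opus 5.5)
Your proposal is correct and follows the paper's argument: the corollary comes from applying Theorem~\ref{th-standardseq} with $f=u$ and the one-member family $\{Y\}$, using that $u\notin\mathcal{L}_{X;Y}(E;F)$ makes $G(X,u,Y)$ nonempty. Your explicit check of linear compatibility, and your remark that $X(E)\subset c_0(E)$ is redundant once detachability is assumed, are both accurate and spell out what the paper leaves implicit.
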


\begin{example}\rm
Using the characterization in \eqref{pip}, if $u \notin \mathcal{L}_{\ell_q^u;\ell_p(\cdot)}(E;F)$ ($0<q\leq p< \infty$) and if $v \notin \mathcal{L}_{\ell_q(\cdot);\ell_p\langle \cdot\rangle}(E;F)$ ($1< q \le p < \infty$), then the sets
$$
\{(x_j)^\infty_{j=1} \in \ell_q^u(E) : (u(x_j))^\infty_{j=1} \notin \ell_p(F)\}
$$
and
$$
\{(x_j)^\infty_{j=1} \in \ell_q(E) : (u(x_j))^\infty_{j=1} \notin \ell_p\langle F\rangle\}
$$
are $(\alpha,\mathfrak{c})$-spaceable if and only if $\alpha < \aleph_0$.
\end{example}

\subsection{Applications in operator ideals}

We explore immediate consequences of Theorem \ref{th-opideals} regarding the algebraic and topological structure of subsets of operator ideals. We begin by examining the pointwise spaceability of differences between classes of summing operators and then extend these results to more general operator ideals. We provide specific examples involving bounded operators (or completely continuous) that fail to be $(X;Y)$-summing to certain sequence classes.

\begin{corollary}
Let $p_1,q_1,p_2,q_2 \in (0,\infty)$ with $p_1 \le p_2$ and $q_1 \le q_2$, and let $\mathcal{S}$ be a Banach standard sequence space. For every Banach space $E$, the set
\[
\Pi_{p_2,q_2}(E;\mathcal{S}) \setminus \Pi_{p_1,q_1}(E;\mathcal{S})
\]
is either empty or pointwise $\mathfrak{c}$-spaceable.
\end{corollary}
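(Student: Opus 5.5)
The plan is to derive this corollary directly from Theorem \ref{th-opideals}, applied with the ideal $\mathcal{I}=\Pi_{p_2,q_2}$ and with $\mathcal{L}_{X;Y}$ taken to be $\Pi_{p_1,q_1}$. Three things must be checked.

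First, $\Pi_{p_2,q_2}$ must be a (quasi-)Banach operator ideal. It is the standard ideal of absolutely $(p_2,q_2)$-summing operators, with quasi-norm
\[
\pi_{p_2,q_2}(u)=\sup\bigl\{\|(u(x_j))_j\|_{p_2} : \|(x_j)_j\|_{\ell_{q_2}^w(E)}\le 1\bigr\}.
\]
When $p_2\ge 1$ this is a norm. When $p_2<1$ it is a $p_2$-norm, and completeness follows exactly as in the Banach case (see \cite{Diestel}).

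Second, $\Pi_{p_1,q_1}(E;\mathcal{S})$ must be identified with $\mathcal{L}_{X;Y}(E;\mathcal{S})$ for standard sequence classes $X,Y$ with $Y$ linearly stable. By \eqref{pip}, $\Pi_{p_1,q_1}(E;\mathcal{S})=\mathcal{L}_{\ell_{q_1}^w;\ell_{p_1}(\cdot)}(E;\mathcal{S})$. Both $\ell_{q_1}^w$ and $\ell_{p_1}(\cdot)$ are standard sequence classes by Example \ref{ex-seqclass}, and $\ell_{p_1}(\cdot)$ is linearly stable, since $\|u(x_j)\|\le\|u\|\|x_j\|$. So we take $X=\ell_{q_1}^w$ and $Y=\ell_{p_1}(\cdot)$. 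The identity \eqref{pip} was stated for $q\le p$. If $q_1>p_1$, then for infinite-dimensional $E$ only the zero operator is $(p_1,q_1)$-summing, but the description as $\mathcal{L}_{\ell_{q_1}^w;\ell_{p_1}(\cdot)}$ still holds as a definition. I would therefore simply adopt that definition in general, so that no case split is needed.

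Third, Theorem \ref{th-opideals} then says that $\Pi_{p_2,q_2}(E;\mathcal{S})\setminus\mathcal{L}_{\ell_{q_1}^w;\ell_{p_1}(\cdot)}(E;\mathcal{S})$ is either empty or pointwise $\mathfrak{c}$-spaceable, and this set is exactly the difference in the statement. The hypotheses $p_1\le p_2$ and $q_1\le q_2$ are not used in the argument. They only make the question meaningful: by the inclusion theorem $\Pi_{p_1,q_1}\subseteq\Pi_{p_2,q_2}$ under suitable relations, so the set is a genuine difference of nested classes, and emptiness is a real possibility.

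The main obstacle is the second step, namely making sure the summing class fits the $(X;Y)$-summing framework verbatim, including degenerate parameter ranges and quasi-Banach exponents below one. I would handle it by taking \eqref{pip} as the defining identity. The remaining verifications are routine, and the statement then follows at once from Theorem \ref{th-opideals}.
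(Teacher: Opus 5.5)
Your proposal is correct and is exactly the paper's (implicit) argument: Theorem \ref{th-opideals} applied with $\mathcal{I}=\Pi_{p_2,q_2}$, $X=\ell_{q_1}^w$ and the linearly stable class $Y=\ell_{p_1}(\cdot)$, via \eqref{pip}. One small point: your first step fails as stated when $q_2>p_2$, since then $\Pi_{p_2,q_2}=\{0\}$ for every $E$, not only infinite-dimensional ones (test a nonzero $u$ on $(j^{-1/p_2}x)_j$), so it is not an operator ideal; in that case, however, the difference is empty and the corollary holds trivially.
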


\begin{corollary}
Let $\mathcal{S}$ be a Banach standard sequence space, and let $X$ and $Y$ be standard sequence classes, with $Y$ linearly stable. Given a Banach space $E$, the set
\[
\mathcal{L}(E;\mathcal{S}) \setminus \mathcal{L}_{X;Y}(E;\mathcal{S})
\]
is either empty or pointwise $\mathfrak{c}$-spaceable.
\end{corollary}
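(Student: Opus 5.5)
This corollary follows directly from Theorem~\ref{th-opideals}, applied with the quasi-Banach operator ideal $\mathcal{I}=\mathcal{L}$. The plan is to check that $\mathcal{L}$, with the usual operator norm, satisfies the hypotheses placed on $\mathcal{I}$ there. It is a Banach operator ideal in the sense recalled at the start of Section~\ref{Sec-PL-OI}. Each component $\mathcal{L}(E;\mathcal{S})$ is a vector space containing the finite-rank operators. The ideal property and the inequality $\|T_1\circ T_2\circ T_3\|\le \|T_1\|\,\|T_2\|\,\|T_3\|$ are standard. We have $\|\mathrm{id}_{\mathbb{K}}\|=1$. Finally, $\mathcal{L}(E;\mathcal{S})$ is complete because $\mathcal{S}$ is a Banach space. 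Hence $\mathcal{L}$ is in particular a (quasi-)Banach operator ideal.

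The remaining hypotheses transfer verbatim from the statement of Theorem~\ref{th-opideals}: $\mathcal{S}$ is a Banach standard sequence space, $X$ and $Y$ are standard sequence classes, and $Y$ is linearly stable. Taking $\mathcal{I}=\mathcal{L}$, the theorem then gives that $\mathcal{L}(E;\mathcal{S})\setminus\mathcal{L}_{X;Y}(E;\mathcal{S})$ is either empty or pointwise $\mathfrak{c}$-spaceable.

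No real obstacle is expected; the only step needing care is the membership check $\mathcal{L}\in$ \{Banach operator ideals\}. One should also note that in the proof of Theorem~\ref{th-opideals} the operators $T_k$ and $\Psi_a$ of Proposition~\ref{prop_pw-mothervector} stay inside $\mathcal{L}(E;\mathcal{S})$. This holds automatically: by the spreading zeros property of $\mathcal{S}$ we have $\|T_kx\|_{\mathcal{S}}=\|Tx\|_{\mathcal{S}}$, so each $T_k$ is bounded, and since $\mathcal{L}(E;\mathcal{S})$ is a Banach space (so $p=1$ suffices), the series defining $\Psi_a$ converges.
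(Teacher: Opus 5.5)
Your proposal is correct and follows the paper's route: the corollary is the special case $\mathcal{I}=\mathcal{L}$ of Theorem~\ref{th-opideals}, and the only thing to verify is that $(\mathcal{L},\|\cdot\|)$ is a Banach operator ideal, which you do, with completeness coming from $\mathcal{S}$ being Banach. Your extra check that the $T_k$ and $\Psi_a$ stay in $\mathcal{L}(E;\mathcal{S})$ is harmless but unnecessary, since the theorem already covers an arbitrary (quasi-)Banach operator ideal.
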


\begin{example}\rm 
(a) The set
\[
\mathcal{L}(E;\mathcal{S}) \setminus \mathcal{L}_{\ell_p^w;\ell_p^{\mathrm{mid}}}(E;\mathcal{S})
\]
is pointwise $\mathfrak{c}$-spaceable if and only if $\ell_p^{\mathrm{mid}}(E) \neq \ell_p^w(E)$ (see \cite[Theorem~2.6]{Botelho-Campos-Santos}).

\vskip 2mm

\noindent (b) The set
\[
\mathcal{L}(E;\mathcal{S}) \setminus \mathcal{L}_{\ell_p^{\mathrm{mid}};\ell_p(\cdot)}(E;\mathcal{S})
\]
is pointwise $\mathfrak{c}$-spaceable if and only if $\mathcal{S}$ is not a subspace of $L_p(\mu)$ for any Borel measure $\mu$ (see \cite[Theorem~2.7]{Botelho-Campos-Santos}).

\vskip 2mm

\noindent (c) For $0<q_1,q_2,r<\infty$, the set
\[
\mathcal{L}(E;\mathcal{S}) \setminus \mathcal{L}_{\ell_r^w;\ell_{p_{1},q_1}(\cdot)}(E;\mathcal{S})
\]
is either empty or pointwise $\mathfrak{c}$-spaceable.

\vskip 2mm

\noindent (d) For $1 \le p < \infty$, the set
\[
\mathcal{L}(E;\mathcal{S}) \setminus \mathcal{L}_{\ell_p(\cdot);\ell_p\langle \cdot \rangle}(E;\mathcal{S})
\]
is either empty or pointwise $\mathfrak{c}$-spaceable.
\end{example}

As in the classical theory of operator ideals, we consider $\mathcal{C}\mathcal{C}$ to be the ideal of completely continuous operators (see \cite[Section~1.6]{Pietsch2}). More precisely, completely continuous operators send weakly convergent sequences into norm-convergent sequences.

\begin{corollary}
Let $\mathcal{S}$ be a Banach standard sequence space, and let $X$ and $Y$ be standard sequence classes, with $Y$ linearly stable. Given a Banach space $E$, the set
\[
\mathcal{CC}(E;\mathcal{S}) \setminus \mathcal{L}_{X;Y}(E;\mathcal{S})
\]
is either empty or pointwise $\mathfrak{c}$-spaceable.
\end{corollary}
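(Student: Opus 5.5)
The corollary will follow directly from Theorem~\ref{th-opideals}, once we check that $\mathcal{CC}$ is an admissible operator ideal in that theorem. The plan is therefore to verify that $\mathcal{CC}$, equipped with the usual operator norm, is a Banach operator ideal, and then invoke the theorem with $\mathcal{I}=\mathcal{CC}$.

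\textbf{Ideal properties.} First I check that $\mathcal{CC}(E;\mathcal{S})$ is a linear subspace of $\mathcal{L}(E;\mathcal{S})$.
\begin{itemize}
\item[(a)] If $u,v$ map weakly convergent sequences to norm convergent ones, then so does $\lambda u+v$.
\item[(b)] Finite rank operators are completely continuous: on a finite-dimensional range, weak and norm convergence coincide.
\item[(c)] For the ideal property, take bounded $w$ and $r$. Then $r$ maps weakly convergent sequences to weakly convergent sequences, $u$ turns these into norm convergent ones, and $w$ preserves norm convergence. Hence $w\circ u\circ r$ is completely continuous.
\end{itemize}
The quasi-norm is the operator norm, so $\|\mathrm{id}_{\mathbb K}\|=1$ and the inequality $\|w u r\|\le\|w\|\,\|u\|\,\|r\|$ is automatic.

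\textbf{Completeness (the main step).} It remains to show that $\mathcal{CC}(E;\mathcal{S})$ is closed in $\mathcal{L}(E;\mathcal{S})$. Let $u_k\to u$ in operator norm with each $u_k$ completely continuous, and let $x_n\rightharpoonup x$ weakly. By the uniform boundedness principle, $M:=\sup_n\|x_n-x\|<\infty$. For each $k$,
\[
\|u(x_n)-u(x)\|\le \|u-u_k\|\,M+\|u_k(x_n)-u_k(x)\|.
\]
Given $\varepsilon>0$, first choose $k$ with $\|u-u_k\|M<\varepsilon$, and then $n$ large enough that the second term is below $\varepsilon$. Hence $u$ is completely continuous, so $(\mathcal{CC},\|\cdot\|)$ is a Banach operator ideal.

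\textbf{Applying the theorem.} With this in hand, Theorem~\ref{th-opideals} applied to $\mathcal{I}=\mathcal{CC}$ shows that $\mathcal{CC}(E;\mathcal{S})\setminus\mathcal{L}_{X;Y}(E;\mathcal{S})$ is either empty or pointwise $\mathfrak{c}$-spaceable. Note that the closure taken in the theorem is with respect to the ideal norm, which here coincides with the operator norm, so no further argument is needed.
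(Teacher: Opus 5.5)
Your proposal is correct and is essentially the paper's argument: the corollary comes from applying Theorem~\ref{th-opideals} with $\mathcal{I}=\mathcal{CC}$, where the paper simply cites Pietsch for the standard fact that $\mathcal{CC}$ is a closed operator ideal, hence a Banach operator ideal under the operator norm. Your direct verification of the ideal axioms and of closedness in $\mathcal{L}(E;\mathcal{S})$ just fills in that cited fact.
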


\begin{example}\rm
The set $\mathcal{C}\mathcal{C}(\ell_1;\ell_1) \setminus \Pi_p(\ell_1;\ell_1)$ is pointwise $\mathfrak{c}$-spaceable for every $0<p<\infty$. Indeed, since $\ell_1$ has the Schur property, the identity operator $\operatorname{Id}_{\ell_1}:\ell_1 \to \ell_1$ is completely continuous. On the other hand, $\operatorname{Id}_{\ell_1}$ never is absolutely $p$-summing by the Dvoretzky-Rogers theorem (see \cite[Theorem 2.18]{Diestel}).
\end{example}

\section*{Acknowledgments and Funding}

\noindent Nacib Gurgel Albuquerque was partially supported in part by CNPq Grants 406457/2023-9 and 403964/2024-5. Luiz Felipe de Pinho Sousa was supported by a CAPES scholarship and CNPq Grant 406457/2023-9.

\end{document}